\nonstopmode \numberwithin{equation}{section}
\newtheorem{theorem}[equation]{Theorem}
\newtheorem{lemma}[equation]{Lemma}
\newtheorem{corollary}[equation]{Corollary}
\numberwithin{equation}{section}
\newtheorem{case}[equation]{Case}
\newtheorem{subcase}[equation]{Subcase}
\newtheorem{claim}[equation]{Claim}
\theoremstyle{definition}
\newtheorem{definition}[equation]{Definition}
\newtheorem{remark}[equation]{Remark}
\newtheorem{examples}[equation]{Examples}
\newtheorem{thm}{Theorem}[section]
\newtheorem{lem}{Lemma}[section]
\newtheorem{cor}{Corollary}[section]
\newtheorem{cl}{Claim}[section]
\newtheorem{ca}{Case}[section]
\newtheorem{sca}{Subcase}[section]
\newtheorem{scl}[section]{Subclaim}
\newtheorem{conj}[equation]{Conjecture}
\theoremstyle{definition}
\newtheorem{defn}{Definition}[section]
\newtheorem{op}[equation]{Open Problem}
\newtheorem{ques}[equation]{Question}
\newtheorem{exam}[equation]{Example}
\newcounter {own}
\def\theown {\thesection       .\arabic{own}}
\newenvironment{pf}[1][]{%
 \vskip 3mm
 \noindent
 \ifthenelse{\equal{#1}{}}%
  {{\slshape Proof. }}%
  {{\slshape #1.} }%
 }%
{\qed\bigskip}
\newcounter{alphabet}
\newcounter{tmp}
\newcommand{\Ref}[1]{\@ifundefined{r@#1}{}{\setcounter{tmp}{\ref{#1}}\Alph{tmp}}}
\newcommand{\diam}{{\operatorname{diam}}}
\def\be{\begin{equation}}
\def\ee{\end{equation}}
\newcommand{\bee}{\begin{enumerate}}
\newcommand{\eee}{\end{enumerate}}
\newcommand{\blem}{\begin{lem}}
\newcommand{\elem}{\end{lem}}
\newcommand{\bthm}{\begin{thm}}
\newcommand{\ethm}{\end{thm}}
\newcommand{\bcor}{\begin{cor}}
\newcommand{\ecor}{\end{cor}}
\newcommand{\beg}{\begin{exam}}
\newcommand{\eeg}{\end{exam}}
\newcommand{\begs}{\begin{examples}}
\newcommand{\eegs}{\end{examples}}
\newcommand{\bdefe}{\begin{defn}}
\newcommand{\edefe}{\end{defn}}
\newcommand{\bprob}{\begin{prob}}
\newcommand{\eprob}{\end{prob}}
\newcommand{\bques}{\begin{ques}}
\newcommand{\eques}{\end{ques}}
\newcommand{\bei}{\begin{itemize}}
\newcommand{\eei}{\end{itemize}}
\newcommand{\bcon}{\begin{conj}}
\newcommand{\econ}{\end{conj}}
\newcommand{\bop}{\begin{op}}
\newcommand{\eop}{\end{op}}
\newcommand{\bca}{\begin{ca}}
\newcommand{\eca}{\end{ca}}
\newcommand{\bsca}{\begin{sca}}
\newcommand{\esca}{\end{sca}}
\newcommand{\bcl}{\begin{cl}}
\newcommand{\ecl}{\end{cl}}
\newcommand{\bscl}{\begin{scl}}
\newcommand{\escl}{\end{scl}}
\newcommand{\bcons}{\begin{conjs}}
\newcommand{\econs}{\end{conjs}}
\newcommand{\bprop}{\begin{propo}}
\newcommand{\eprop}{\end{propo}}
\newcommand{\er}{\end{rem}}
\newcommand{\brs}{\begin{rems}}
\newcommand{\ers}{\end{rems}}
\newcommand{\bo}{\begin{obser}}
\newcommand{\eo}{\end{obser}}
\newcommand{\bos}{\begin{obsers}}
\newcommand{\eos}{\end{obsers}}
\newcommand{\bpf}{\begin{pf}}
\newcommand{\epf}{\end{pf}}
\newcommand{\ba}{\begin{array}}
\newcommand{\ea}{\end{array}}
\newcommand{\beq}{\begin{eqnarray}}
\newcommand{\beqq}{\begin{eqnarray*}}
\newcommand{\eeq}{\end{eqnarray}}
\newcommand{\eeqq}{\end{eqnarray*}}
\newcounter{minutes}\setcounter{minutes}{\time}
\newcounter{hours}\setcounter{hours}{\time}
\begin{document}

\bibliographystyle{amsplain}

\title[On John domains in Banach spaces]
{On John domains in Banach spaces}
\author[]{Y. Li}
\address{Yaxiang. Li, College of Science,
Central South University of
Forestry and Technology, Changsha,  Hunan 410004, People's Republic
of China}
 \email{yaxiangli@163.com}
\author[]{M. Vuorinen}
\address{Matti. Vuorinen, Department of Mathematics and Statistics, University of Turku,
FIN-20014 Turku, Finland}
\email{vuorinen@utu.fi}

\author{X. Wang $^* $
}
\address{Xiantao. Wang, Department of Mathematics,
Hunan Normal University, Changsha,  Hunan 410081, People's Republic
of China} \email{xtwang@hunnu.edu.cn}

\date{}
\subjclass[2010]{Primary: 30C65, 30F45; Secondary: 30C20}
\keywords{removability, John domain, inner uniform domain, quasihyperbolic metric.\\
${}^{\mathbf{*}}$ Corresponding author\\
This research was supported by the Academy of Finland, Project 2600066611, and by NSF of
China (No. 11071063).}



\def\thefootnote{}
\footnotetext{ \texttt{\tiny File:~\jobname .tex,
          printed: \number\year-\number\month-\number\day,
          \thehours.\ifnum\theminutes<10{0}\fi\theminutes }
} \makeatletter\def\thefootnote{\@arabic\c@footnote}\makeatother

\begin{abstract}
We study the stability of John domains in Banach spaces under removal of a countable set of points.
In particular, we prove that the class of John domains is stable
in the sense that removing a certain type of closed countable set from
the domain yields a new domain which also is a John domain. We apply this result to prove the stability of the inner uniform domains. Finally,
we consider a wider class of domains, so called $\psi$-John domains
and prove a similar result for this class.
\end{abstract}

\maketitle\pagestyle{myheadings} \markboth{}{On John domains in Banach spaces}

\section{Introduction}
The class of domains, nowadays known as John domains and originally introduced by John \cite{J} in the study of elasticity theory, has been investigated during the past three decades by many people
in connection with applications of classical analysis and geometric
function theory. See for instance \cite{Bro, MS, NV} and the references
therein.
Here we study the classes of John domains and the wider class of $\psi$-John domains \cite{HV, Vai4} and the stability
of these two classes of domains under the removal of a countable closed set
of points. The motivation for this paper stems from the discussions in \cite{HPW,  Vai6}, where the effect of the removal of a finite set of points was examined.
See also the very recent paper \cite{klsv}.

Suppose that $D$ is a domain in a real Banach space $E$ with dimension at least $2$ and let  $P_D$ denote a countable set in $D$ such that the quasihyperbolic distance w.r.t. $D$ between each pair of distinct points in $P_D$ is at least $b$ where $b>0$ is a constant.
The first main result of this paper shows that
$D$ is a $c$-John domain if and only if $D\setminus P_D$ is a $c_1$-John
domain, where $c$ and $c_1$ are two constants depending only on each other and on $b$.
Applying this result, we show that $D$ is inner uniform if and only if $D\setminus P_D$ is inner uniform.  Our second main result shows that $D$ is a $\psi$-John domain if and only if $D\setminus P_D$ is a $\psi_1$-John
domain, where $\psi$ and $\psi_1$ depend only on each other and on $b$.

The methods applied in the proofs rely on standard
notions of metric space theory: curves, their lengths, and nearly length-minimizing curves. It
should be noted that we employ several metric space
structures on the domain $D$ including hyperbolic type metrics. We use three metrics: the norm metric, the distance ratio metric and the quasihyperbolic metric on the domain $D$ and, moreover, also on its subdomains.


\section{The second section}

Throughout the paper, we always assume that $E$ denotes a real
Banach space with dimension at least $2$. The norm of a vector $z$
in $E$ is written as $|z|$, and for each pair of points $z_1$, $z_2$
in $E$, the distance between them is denoted by $|z_1-z_2|$, the
closed line segment with endpoints $z_1$ and $z_2$ by $[z_1, z_2]$.
We always use $\mathbb{B}(x_0,r)$ to denote the open ball $\{x\in
E:\,|x-x_0|<r\}$ centered at $x_0$ with radius $r>0$. Similarly, for
the closed balls and spheres, we use the usual notations
$\overline{\mathbb{B}}(x_0,r)$ and $ \mathbb{S}(x_0,r)$,
respectively.

For each pair of points $z_1$, $z_2$ in $D$, the {\it distance ratio
metric} $j_D(z_1,z_2)$ between $z_1$ and $z_2$ is defined by
$$j_D(z_1,z_2)=\log\Big(1+\frac{|z_1-z_2|}{\min\{d_D(z_1),d_D(z_2)\}}\Big),$$where $d_D(z)$ denotes the
distance from $z$ to the boundary $\partial D$ of $D$.

The {\it quasihyperbolic length} of a rectifiable arc or a path
$\gamma$ in $D$ is the number (cf.
 \cite{Avv,GP,Geo,Vai6-0})
$$\ell_k(\gamma)=\int_{\gamma}\frac{1}{d_D(z)}\,|dz|.
$$

For each pair of points $z_1$, $z_2$ in $D$, the {\it
quasihyperbolic distance} $k_D(z_1,z_2)$ between $z_1$ and $z_2$ is
defined in the usual way:
$$k_D(z_1,z_2)=\inf\ell_k(\alpha),
$$
where the infimum is taken over all rectifiable arcs $\alpha$
joining $z_1$ to $z_2$ in $D$.

 For all $z_1$, $z_2$ in $D$, we have
(cf. \cite{Vai6-0})

\beq\label{eq(0000)} k_{D}(z_1, z_2)\geq
\inf\left\{\log\Big(1+\frac{\ell(\alpha)}{\min\{d_{D}(z_1), d_{D}(z_2)\}}\Big)\right\}\geq j_D(z_1, z_2)\eeq
$$ \geq
\Big|\log \frac{d_{D}(z_2)}{d_{D}(z_1)}\Big|,$$ where the infimum is
taken over all rectifiable curves $\alpha$ in $D$ connecting $z_1$
and $z_2$, $\ell(\alpha)$ denotes the length of $\alpha$. Next, if
$|z_1-z_2|< d_D(z_1)$, then we have \cite{Vai6-0}, \cite[Lemma
2.11]{vu81}
\begin{equation} \label{upperbdk}
k_D(z_1,z_2)\le \log\Big( 1+ \frac{
|z_1-z_2|}{d_D(z_1)-|z_1-z_2|}\Big) \le \frac{
|z_1-z_2|}{d_D(z_1)-|z_1-z_2|}\,,
\end{equation}
where the last inequality follows from the following elementary
inequality
$$
\frac{r}{1-r/2} \le \log \frac{1}{1-r} \le \frac{r}{1-r} \, \quad
{\rm for }\,\, 0\le r<1 \,.$$


Gehring and Palka \cite{GP} introduced the quasihyperbolic metric of
a domain in $R^n$. Many of the basic properties of this metric may
be found in \cite{Geo,HPWW,k,krt,rt,Vai9}. Recall that an arc $\alpha$ from $z_1$ to
$z_2$ is a {\it quasihyperbolic geodesic} if
$\ell_k(\alpha)=k_D(z_1,z_2)$. Each subarc of a quasihyperbolic
geodesic is obviously a quasihyperbolic geodesic. It is known that a
quasihyperbolic geodesic between every pair of points in $E$ exists
if the dimension of $E$ is finite, see \cite[Lemma 1]{Geo}. This is
not true in infinite dimensional Banach spaces (cf. \cite[Example 2.9]{Vai6-0}). In
order to remedy this shortage, V\"ais\"al\"a introduced the
following concepts \cite{Vai6}.

\begin{definition} \label{def1.4}Let $D\neq E$ and $c\geq 1$. An arc
$\alpha\subset D$ is a $c$-neargeodesic if
$\ell_k(\alpha[x,y])\leq c\;k_D(x,y)$ for all $x, y\in \alpha$.
\end{definition}

In \cite{Vai6}, V\"ais\"al\"a proved the following property
concerning the existence of neargeodesics in $E$.

\begin{lemma} $($\cite[Theorem 3.3]{Vai6}$)$ \label{ThmA}
Let $z_1,\, z_2 \in D\neq E$ and let $c>1$. Then there is a
$c$-neargeodesic  joining $z_1$ and $z_2$ in $D$.\end{lemma}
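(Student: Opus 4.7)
The plan is to construct a $c$-neargeodesic by iterative refinement of an initial nearly length-minimizing arc. Fix an auxiliary constant $\lambda\in(1,c)$, say $\lambda=(1+c)/2$, and set $T=k_D(z_1,z_2)$. By the definition of $k_D$ as an infimum, we may choose a rectifiable arc $\alpha_0$ from $z_1$ to $z_2$ in $D$ with $\ell_k(\alpha_0)<\lambda T$. If $\alpha_0$ already satisfies $\ell_k(\alpha_0[x,y])\le c\,k_D(x,y)$ for every pair $x,y\in\alpha_0$, we are done.

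Otherwise I would iterate an \emph{excise-and-replace} step. Given the current arc $\alpha$, if there exist $x,y\in\alpha$ with $\ell_k(\alpha[x,y])>c\,k_D(x,y)$, pick such a pair (approximately maximizing the excess $\ell_k(\alpha[x,y])-c\,k_D(x,y)$), choose by the definition of $k_D$ a rectifiable arc $\beta$ from $x$ to $y$ in $D$ with $\ell_k(\beta)\le\lambda\,k_D(x,y)$, and form a new arc $\alpha'$ from $\alpha$ by removing $\alpha[x,y]$ and inserting $\beta$. A direct estimate yields
$$
\ell_k(\alpha')\le\ell_k(\alpha)-(c-\lambda)\,k_D(x,y),
$$
so each such move strictly shortens the arc and keeps it in the family $\mathcal{F}$ of arcs from $z_1$ to $z_2$ in $D$ of quasihyperbolic length at most $\lambda T$.

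The main obstacle is the passage to a limit in the absence of Arzel\`a--Ascoli compactness in an infinite-dimensional Banach space. To handle this I would apply Zorn's lemma to $\mathcal{F}$ ordered so that $\alpha'\preceq\alpha$ whenever $\alpha'$ is obtained from $\alpha$ by a (possibly transfinite) sequence of excise-and-replace moves. The technical heart of the argument is verifying that every chain in $\mathcal{F}$ has a lower bound: the uniform length bound $\lambda T$ together with the monotone decrease of $\ell_k$ along the chain lets one parametrize the chain elements by quasihyperbolic arclength on a common interval and extract a Cauchy-type limit in the quasihyperbolic metric, the limit arc remaining in $\mathcal{F}$ by lower semicontinuity of $\ell_k$. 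Once chains have lower bounds, Zorn supplies a minimal element $\alpha_\ast\in\mathcal{F}$; if $\alpha_\ast$ failed the $c$-neargeodesic inequality at some pair $(x,y)\in\alpha_\ast\times\alpha_\ast$, one further excise-and-replace move would produce a strictly smaller element of $\mathcal{F}$, contradicting minimality. Hence $\alpha_\ast$ is the desired $c$-neargeodesic joining $z_1$ and $z_2$ in $D$.
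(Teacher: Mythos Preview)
The paper does not supply its own proof of this lemma; it is quoted verbatim as V\"ais\"al\"a's result \cite[Theorem~3.3]{Vai6} and used as a black box. So there is no argument in the paper to compare your proposal against, and the intended ``proof'' here is simply the citation.

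That said, your sketch has a genuine gap at the Zorn step, and it is exactly the gap that makes the infinite-dimensional case nontrivial. You assert that every chain in $\mathcal{F}$ has a lower bound because the uniform bound $\ell_k\le\lambda T$ and monotonicity of $\ell_k$ let one ``extract a Cauchy-type limit in the quasihyperbolic metric.'' But a uniform length bound does \emph{not} give sequential compactness of arcs in an infinite-dimensional Banach space: this is precisely the obstruction noted just before the lemma (and witnessed by V\"ais\"al\"a's Example~2.9), and it is why quasihyperbolic geodesics can fail to exist. Arcs in a chain need not converge pointwise or in any metric on paths; monotone decrease of a length functional along a chain says nothing about convergence of the underlying curves. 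Without a concrete mechanism producing a limiting arc (and an arc, not merely a connected set), the lower-bound claim is unsupported, and with it the whole Zorn argument collapses.

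There is also a secondary issue you have not addressed: after an excise-and-replace move the resulting path $\alpha'$ need not be an embedded arc, since the inserted piece $\beta$ may meet $\alpha\setminus\alpha[x,y]$. One can usually pass to an injective subpath, but this must be done carefully so that the neargeodesic inequality, which is required for \emph{all} pairs $x,y$ on the final arc, is preserved. V\"ais\"al\"a's actual proof avoids both difficulties by a different construction; if you want to prove the lemma rather than cite it, you should consult \cite{Vai6} directly.
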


\begin{definition}\label{def-1} A domain $D$ in $E$ is called $c$-{\it John domain}
in the norm metric provided there exists a constant $c$ with the
property that each pair of points $z_{1},z_{2}$ in $D$ can be joined
by a rectifiable arc $\alpha$ in $ D$ such that for all $z\in \alpha$ the following holds:
\begin{equation} \label{eq-1} \min\{\ell (\alpha [z_1, z]), \; \ell (\alpha
[z_2, z])\}\leq c\,d_{D}(z),\end{equation}
 where $\alpha[z_{j},z]$ denotes the part of $\alpha$ between $z_{j}$ and $z$ (cf.
\cite{Bro, NV}). The arc $\alpha$ is called to be a {\it $c$-cone arc} .\end{definition}

A domain $D$ in $E$  is said to be a {\it $c$-uniform
domain} (cf. \cite{Martio-80,MS, Vai, Vai6,Vai5}) if there
is a constant $c\geq 1$ such that each pair of points $z_1,z_2\in D$
can be joined by an arc $\alpha$  satisfying \eqref{eq-1} and
\begin{equation}\label{eq-2} \ell(\alpha)\leq c\,|z_{1}-z_{2}|. \end{equation} We also say that $\alpha$ is a {\it  $c$-uniform
arc} (cf.
\cite{Vai4}).

For $z_1$, $z_2\in D$,  the {\it inner length metric $\lambda_D(z_1,
z_2)$} between these points is defined by
$$\lambda_D(z_1,z_2)=\inf \{\ell(\alpha):\; \alpha\subset D\;
\mbox{is a rectifiable arc joining}\; z_1\; \mbox{and}\; z_2 \}.$$

We say that a domain $D$ in $E$ is {\it an inner $c$-uniform domain} if there
is a constant $c\geq 1$ such that each pair of points $z_1,z_2\in D$
can be joined by an arc $\alpha$  satisfying \eqref{eq-1} and
\begin{equation}\label{eq-3} \ell(\alpha)\leq c \lambda_D(z_1, z_2).\end{equation}
Such an arc
 $\alpha$ is called to be an {\it inner $c$-uniform arc} (cf. \cite{Vai4}).

Obviously, uniform domains are inner uniform domains, but inner uniform does not imply uniform. See
\cite{BHK,FW,Geo,Martio-80,MS,  Vai,Vai6,Vai4} for more details on
uniform domains and inner uniform domains.

{\em Remarks.} If we replace \eqref{eq-1}, \eqref{eq-2} and \eqref{eq-3} by
\begin{equation} \label{eq-1'} \min\{\diam (\alpha [z_1, z]), \; \diam (\alpha
[z_2, z])\}\leq c\,d_{D}(z),\end{equation}
\begin{equation}\label{eq-2'} \diam(\alpha)\leq c\,|z_{1}-z_{2}| \end{equation} and
\begin{equation}\label{eq-3'} \diam(\alpha)\leq c \lambda_D(z_1, z_2),\end{equation}
then we get concepts which in the case $E=\mathbb{R}^n$ are $n$-quantitatively equivalent to $c$-John domain, $c$-uniform domain and inner $c$-uniform domain,
respectively \cite{NV}. But in an arbitrary Banach space,
each of these three conditions leads to a wider class of domains.
For example, the broken tube domain considered by V\"ais\"al\"a \cite[4.12]{Vai6-0} (see also \cite{Vai7})
is neither John nor quasiconvex (a metric space is {\it $c$-quasiconvex} if each pair of points $x,y$ can be joined by an arc with \eqref{eq-2} holds). Nevertheless, one can join a given pair of points in this bounded domain by arcs satisfying \eqref{eq-1'}, \eqref{eq-2'} and \eqref{eq-3'}.

Various classes of domains have been studied in analysis (e.g. see \cite{HPS}).
For some classes, the removal of a finite number of points from a domain may yield
a domain no longer in this class \cite{HPS}. In \cite{HPW}, the authors proved that the removal of a finite number of points from a
John domain yields another John domain.

\begin{lemma}\label{ThmB} $($\cite[Theorem 1.4]{HPW}$)$~~A domain $D\subset \mathbb{R}^n$ is a John domain if and only if
$G=D\setminus P$ is also a John domain, where
$P=\{p_1,p_2,\cdots,p_m\}\subset D.$\end{lemma}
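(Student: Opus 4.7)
My plan is to prove the two implications separately and, for the harder one, to reduce to the case $|P|=1$ by induction on $m=|P|$.

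For the direction ``$G$ John $\Rightarrow$ $D$ John'': given $z_1, z_2 \in D$, if both lie in $G$ then a $c$-cone arc $\beta$ in $G$ joining them also witnesses the $c$-John property of $D$, because $G \subset D$ forces $d_G(z) \leq d_D(z)$, so the cone inequality transfers with the same constant. If one endpoint lies in $P$, say $z_1 = p_i$, replace $z_1$ by a point $z_1' \in G$ with $|z_1 - z_1'|$ very small compared to $d_D(p_i) > 0$, apply the previous step to $z_1', z_2$, and prepend the short segment $[z_1, z_1']$; the cone condition on this segment is trivially controlled since $d_D$ is comparable to the positive constant $d_D(p_i)$ near $p_i$.

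For the direction ``$D$ John $\Rightarrow$ $G$ John'' with $P = \{p\}$, set $r = d_D(p) > 0$ and $\rho = r/4$. Given $z_1, z_2 \in G$, take a $c$-cone arc $\alpha$ in $D$ joining them. If $\alpha$ does not enter $\overline{\mathbb{B}}(p, \rho)$, then $\beta := \alpha$ already lies in $G$; for $z \in \beta$ we have $|z-p| \geq \rho$, and the triangle inequality yields $d_D(z) \leq d_D(p) + |z - p| \leq 5|z-p|$, whence $d_G(z) = \min\{d_D(z), |z-p|\} \geq d_D(z)/5$, so the cone condition for $\alpha$ in $D$ implies the cone condition for $\beta$ in $G$ with constant $5c$.

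If $\alpha$ enters $\overline{\mathbb{B}}(p, \rho)$, let $w_1$ (resp.\ $w_2$) be its first (resp.\ last) crossing of $\mathbb{S}(p, \rho)$, and replace $\alpha|_{[w_1, w_2]}$ by a detour $\gamma \subset G$ consisting of a radial segment from $w_1$ to some $w_1' \in \mathbb{S}(p, \rho/2)$, a shortest great-circle arc on $\mathbb{S}(p, \rho/2)$ from $w_1'$ to the corresponding $w_2'$, and a radial segment from $w_2'$ to $w_2$; the total length of $\gamma$ is at most $(\pi+2)\rho$. The main obstacle is verifying the cone condition for the new arc $\beta$ at points $z \in \gamma$: we have $d_G(z) \geq \rho/2$, so it suffices to bound $\min_{j \in \{1,2\}} \ell(\beta[z_j, z])$ by a constant multiple of $\rho$. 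Writing $\ell(\beta[z_j, z]) \leq \ell(\alpha[z_j, w_i]) + \ell(\gamma)$ for the appropriate index $i$, and applying the cone condition for $\alpha$ at $w_i$ together with the bound $d_D(w_i) \leq r + \rho$, we obtain $\min_j \ell(\beta[z_j, z]) \leq c(r+\rho) + (\pi+2)\rho$, which is bounded by a constant multiple of $\rho \leq 2 d_G(z)$. Iterating this surgery once per point of $P$ yields the John property of $G$ with a constant depending on $c$, $m$, and the configuration of $P$ in $D$.
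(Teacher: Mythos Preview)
The paper does not give its own proof of this lemma; it is quoted from \cite{HPW}. The paper's Theorem~\ref{thm1}, which generalizes the statement to Banach spaces and to countably many punctures, is proved by a different mechanism (Lemma~\ref{coro}): rather than a single spherical detour around each removed point, one selects along the given cone arc a finite sequence of points where the ratio $d_D/d_G$ crosses fixed thresholds and splices in short uniform arcs supplied by Lemma~\ref{lem3-1}. Your great-circle surgery is the natural Euclidean argument in $\mathbb{R}^n$, and the overall idea is sound, but as written there are two genuine gaps.

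First, you implicitly assume $z_1, z_2 \notin \overline{\mathbb{B}}(p,\rho)$. If, say, $z_1 \in \mathbb{B}(p,\rho)$, your ``first crossing'' $w_1$ is an exit point and $\alpha[z_1,w_1]$ lies \emph{inside} the ball; points there can be arbitrarily close to $p$ (indeed $\alpha\subset D$ may even pass through $p$), so $\beta$ need not lie in $G$ and the estimate $d_G\ge d_D/5$ fails. Second, you verify the cone inequality only for $z\in\gamma$, not for $z$ on the retained portions $\alpha[z_1,w_1]$ and $\alpha[w_2,z_2]$. For such $z$ the arclength $\ell(\beta[z,z_2])$ differs from $\ell(\alpha[z,z_2])$ by $\ell(\gamma)-\ell(\alpha[w_1,w_2])$, which may be of order $\rho$ even when $d_D(z)$ looks small, so one cannot simply reuse the $5c$ bound from the non-entering case. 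The repair for the second gap is to note that if $z\in\alpha[z_1,w_1]$ satisfies $\ell(\alpha[z,z_2])\le c\,d_D(z)$, then $|z-w_1|\le c\,d_D(z)$ while $d_D(w_1)\ge 3\rho$, whence $d_D(z)\ge 3\rho/(c+1)$ and the added $\ell(\gamma)\le(\pi+2)\rho$ is controlled by $d_D(z)$ after all. Both gaps are fixable, and once filled your iteration actually yields a John constant depending only on $c$ and $m$, not on the configuration of $P$---you are underselling your own argument.
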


In general, when $P$ is an infinite closed set in $D$, $D\setminus P$ need
not be a John domain $($\cite[Example 1.5]{HPW}$)$. In this paper, we continue the study of the removability properties of John domains and prove that if $P$
satisfies a certain separation condition, then $D\setminus P$ is still a John domain
if $D$ is a John domain.

Let $b>0$ be a constant. In what follows, for a domain $D$ in $E$, and for
a sequence $X= \{x_j: j=1,2,...\}$ of points in $D$
satisfying {\it the quasihyperbolic separation condition}
$$  k_D(x_i,x_j)\geq b\;\; {\rm for}\;\; i\neq j, $$
 we always write
$$P_D=X.$$
Further, we assume that the set $P_D$ satisfying the quasihyperbolic separation condition contains at least two points, and in the following, without loss of generality, we may assume that $b=\frac{1}{2}$.

Given $x\in D$ and $s\in(0,1),$ for $z_1, z_2\in \mathbb{B}(x,s
d_D(x))$, we see from \eqref{upperbdk} that
$$k_D(z_1,z_2)\leq 2 \log (1/(1-s)).$$ This fact, together with the definition of $P_D,$ yields the following lemma.

\begin{lemma}\label{star}For all $w\in D$, there exists at most one point $x_i$ of $P_D$ such that $x_i\in \mathbb{B}(w,\frac{1}{6}d_D(w))$.\end{lemma}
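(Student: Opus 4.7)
The plan is to argue by contradiction using the estimate stated in the paragraph immediately preceding the lemma. Suppose that two distinct points $x_i,x_j$ of $P_D$ both lie in the ball $\mathbb{B}(w,\tfrac{1}{6}d_D(w))$. Applying the displayed observation with $x=w$ and $s=\tfrac{1}{6}$, we obtain
\[
k_D(x_i,x_j)\le 2\log\frac{1}{1-1/6}=2\log\frac{6}{5}.
\]
A direct numerical check shows $2\log(6/5)<\tfrac12=b$, which contradicts the quasihyperbolic separation condition $k_D(x_i,x_j)\ge b$ defining $P_D$.

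For completeness, I would briefly indicate where the inequality $k_D(z_1,z_2)\le 2\log(1/(1-s))$ for $z_1,z_2\in\mathbb{B}(x,sd_D(x))$ comes from, since the proof rests entirely on this: apply \eqref{upperbdk} to each of the pairs $(x,z_1)$ and $(x,z_2)$, noting that $|z_i-x|<sd_D(x)<d_D(x)$ so the hypothesis of \eqref{upperbdk} is satisfied, which yields $k_D(x,z_i)\le\log(1/(1-s))$ for $i=1,2$, and then combine by the triangle inequality for $k_D$.

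There is essentially no obstacle: the lemma is a quick numerical consequence of the quasihyperbolic upper bound and the separation hypothesis. The only thing worth checking carefully is that the constant $\tfrac{1}{6}$ is indeed small enough to make $2\log(6/5)$ strictly smaller than the assumed separation constant $b=\tfrac12$, which it is.
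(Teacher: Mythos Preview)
Your proof is correct and follows exactly the approach indicated in the paper: the paper derives the lemma directly from the displayed bound $k_D(z_1,z_2)\le 2\log(1/(1-s))$ for $z_1,z_2\in\mathbb{B}(x,sd_D(x))$ together with the separation hypothesis, and your argument (with $s=\tfrac16$ and the numerical check $2\log(6/5)<\tfrac12$) is precisely this.
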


\begin{lemma} \label{ThmC}{\rm (\cite[Lemma 6.7]{Vai6})} Suppose that $G$ is a $c$-uniform domain and that $x_0\in G$. Then $G_0=G\setminus\{x_0\}$
 is $c_1$-uniform with $c_1=c_1(c)$ (This means that $c_1$ is a constant depending only on $c$). Moreover, from the proof of \cite[Lemma 6.7]{Vai6}
 we see that $c_1\leq 9c.$\end{lemma}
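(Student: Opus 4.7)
The plan is to start with a $c$-uniform arc $\gamma$ in $G$ from $z_1$ to $z_2$ and either use it directly (if it stays away from $x_0$) or replace the portion of $\gamma$ near $x_0$ by a short detour that avoids $x_0$. The key geometric observation is that if $z \in G$ satisfies $|z - x_0| \geq d_G(x_0)/2$, then the triangle inequality gives $d_G(z) \leq d_G(x_0) + |z-x_0| \leq 3|z-x_0|$, so
$$d_{G_0}(z) = \min\{d_G(z), |z-x_0|\} \geq d_G(z)/3.$$
Hence on any portion of $\gamma$ that stays outside $\mathbb{B}(x_0, d_G(x_0)/2)$, the $c$-uniformity of $\gamma$ in $G$ upgrades directly to $(3c)$-uniformity in $G_0$.

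If $\gamma$ enters a neighborhood of $x_0$, I would pick a scale $\rho$ (to be tuned) and let $y_1, y_2$ be the first and last points, respectively, where $\gamma$ oriented from $z_1$ to $z_2$ crosses $\mathbb{S}(x_0, \rho)$. Then I would replace $\gamma[y_1, y_2]$ by the broken line $[y_1, w] \cup [w, y_2]$, where $w \in \mathbb{S}(x_0, \rho)$ is chosen, using $\dim E \geq 2$, so that neither segment passes through $x_0$. The detour then has length at most $4\rho$, stays in $G$ (since $\rho \leq d_G(x_0)/2$), and along it $d_{G_0}(\cdot)$ is bounded below by a fixed fraction of $\rho$, so the cone and uniform conditions on the detour itself are immediate.

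The main obstacle is calibrating $\rho$ in a way that works uniformly. When $z_1, z_2$ are both close to $x_0$ (so that $|z_1-z_2|$ may be much smaller than $d_G(x_0)$ — think of $z_1 = x_0 - \varepsilon v$, $z_2 = x_0 + \varepsilon v$), one is forced to take $\rho$ of order $\max\{|z_1-x_0|, |z_2-x_0|\}$ rather than of order $d_G(x_0)$, and to check that $d_{G_0}(z_i)$ is then comparable to $\rho$ at the endpoints. Conversely, when $\gamma$ actually reaches the sphere $\mathbb{S}(x_0, \rho)$ at $y_1$, the $c$-cone condition of $\gamma$ applied at $y_1$ forces $\ell(\gamma)$, and hence $|z_1-z_2|$, to be bounded below by a multiple of $\rho/c$, so the extra detour length $4\rho$ is controlled by a universal constant times $c|z_1-z_2|$. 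Combining this length bound with the factor $3$ coming from the boundary-distance comparison above, and tracking the interplay between the two factors carefully, yields the stated constant $c_1 \leq 9c$.
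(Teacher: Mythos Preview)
The paper does not give its own proof of this lemma; it simply quotes V\"ais\"al\"a's result \cite[Lemma~6.7]{Vai6} and extracts the quantitative bound $c_1\le 9c$ from that source. So there is nothing in the present paper to compare against, and your sketch should be measured against V\"ais\"al\"a's argument (or any correct argument) rather than anything here.

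Your outline has the right architecture---the comparison $d_{G_0}(z)\ge d_G(z)/3$ whenever $|z-x_0|\ge d_G(x_0)/2$ is exactly the mechanism that controls the portion of $\gamma$ away from $x_0$, and the idea of replacing the part of $\gamma$ near $x_0$ by a short detour is standard. There is, however, a genuine gap in your detour step. You choose $w\in\mathbb{S}(x_0,\rho)$ so that neither segment $[y_1,w]$, $[w,y_2]$ passes through $x_0$, and then assert that along the broken line $d_{G_0}(\cdot)$ is bounded below by a fixed fraction of $\rho$. Merely avoiding the single point $x_0$ does not give that: in an arbitrary Banach space (or even in $\mathbb{R}^2$ with a poorly chosen $w$) the chord $[y_1,w]$ can pass as close to $x_0$ as you like, so the cone condition on the detour would fail with any universal constant. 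You must either specify $w$ quantitatively (e.g.\ arrange $|w-y_i|\le\rho$ so that every chord stays at distance $\ge\rho/2$ from $x_0$, possibly using several intermediate points when $|y_1-y_2|>\rho$), or---more in the spirit of how the present paper handles analogous detours---invoke the fact, recorded just after this lemma, that $\mathbb{B}(x_0,\rho)\setminus\{x_0\}$ is itself uniform with an absolute constant, and take the detour to be a uniform arc in that punctured ball. Either fix repairs the argument, but as written the claim ``bounded below by a fixed fraction of $\rho$'' is unsupported, and the asserted constant $9c$ is not actually derived.
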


We note that each ball $\mathbb{B}(x,r)$ is $2$-uniform and $\mathbb{B}(x,r)\setminus\{x\}$ is $10$-uniform by the proof of \cite[Theorem 6.5]{Vai6}.
By Lemma \ref{star} and  \ref{ThmC}, the following holds.

\begin{lemma}\label{lem3-1}For $x_0\in D$, $\mathbb{B}(x_0,\frac{1}{6}d_D(x_0))\setminus P_D$ is $c_2$-uniform with
$2\leq c_2\leq 18$.\end{lemma}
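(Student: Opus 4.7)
The plan is to combine the separation property from Lemma \ref{star} with the single-point removal result of Lemma \ref{ThmC}. Write $B:=\mathbb{B}(x_0,\frac{1}{6}d_D(x_0))$; this is the domain whose uniformity after puncturing we need to estimate. Since $x_0\in D$, applying Lemma \ref{star} with $w=x_0$ immediately gives $|P_D\cap B|\le 1$. This is the crucial reduction: $B\setminus P_D$ is obtained from $B$ by removing either zero or one point, so only the single-puncture case of Lemma \ref{ThmC} needs to be invoked, and we need not worry about iterating a removal procedure through an infinite set.

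Next I would split into two cases according to the size of $P_D\cap B$. If $P_D\cap B=\emptyset$, then $B\setminus P_D=B$; the remark preceding the lemma records that $B$ is $2$-uniform, so the value $c_2=2$ works. If instead $P_D\cap B=\{x_i\}$ for a (necessarily unique) point $x_i\in B$, then $B\setminus P_D=B\setminus\{x_i\}$. Since $B$ is $2$-uniform and $x_i\in B$, Lemma \ref{ThmC} applied with $c=2$ and $G=B$ provides a constant $c_1$ with $c_1\le 9\cdot 2=18$ such that $B\setminus\{x_i\}$ is $c_1$-uniform. In either case, $B\setminus P_D$ is $c_2$-uniform for some $c_2\in[2,18]$, which is exactly the conclusion of the lemma.

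There is really no substantial obstacle here, since the argument is essentially bookkeeping that converts the quasihyperbolic separation hypothesis into a statement about how often $P_D$ meets a fixed small ball. The only point where one must be careful is in verifying that the radius $\frac{1}{6}d_D(x_0)$ chosen in the definition of $B$ matches the radius for which Lemma \ref{star} was proved, so that the bound on $k_D$ inside $\mathbb{B}(x_0,\frac{1}{6}d_D(x_0))$ (which is at most $2\log(6/5)<\frac{1}{2}=b$) indeed forces at most one $x_i\in P_D$ to lie in $B$; this is automatic with the stated radius. After that, the numerical estimate $c_2\le 18$ is simply $9$ times the known uniformity constant $2$ of a ball.
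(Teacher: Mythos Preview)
Your argument is correct and follows exactly the route indicated in the paper: invoke Lemma~\ref{star} at $w=x_0$ to reduce to removing at most one point from the $2$-uniform ball $B$, then apply Lemma~\ref{ThmC} with $c=2$ to obtain $c_2\le 9\cdot 2=18$. The paper records this lemma without a written proof, simply citing Lemmas~\ref{star} and~\ref{ThmC}; your write-up is a faithful expansion of that citation.
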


\begin{lemma}lem\label{lem3-2}For $x,y\in D$, if there is a $c_3$-cone arc $\gamma$ joining $x, y$ in $D$, then for each $w\in \gamma$ the following holds:
$$d_D(w)\geq \frac{1}{2c_3}\min\{d_D(x), d_D(y)\}.$$ Moreover, if $\ell(\gamma[x,w])\leq \ell(\gamma[y,w])$, then $$d_D(w)\geq \frac{1}{2c_3}d_D(x).$$ Otherwise, $$d_D(w)\geq \frac{1}{2c_3}d_D(y).$$
\end{lemma}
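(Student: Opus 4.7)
The plan is to unpack the cone-arc definition and combine it with the $1$-Lipschitz property of the distance function $d_D$ on $D$. By the definition of a $c_3$-cone arc applied at the point $w \in \gamma$, we have
$$\min\{\ell(\gamma[x,w]),\,\ell(\gamma[y,w])\} \le c_3\, d_D(w).$$
Without loss of generality, assume $\ell(\gamma[x,w]) \le \ell(\gamma[y,w])$; then directly $\ell(\gamma[x,w]) \le c_3\, d_D(w)$. The otherwise case will follow by the symmetric argument with the roles of $x$ and $y$ exchanged.

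Next I would note that the Euclidean distance is bounded by arclength, so
$$|x-w| \le \ell(\gamma[x,w]) \le c_3\, d_D(w),$$
and then apply the standard estimate that $d_D$ is $1$-Lipschitz with respect to the norm metric, which gives $d_D(x) \le d_D(w) + |x-w|$. Combining these,
$$d_D(x) \le d_D(w) + c_3\, d_D(w) = (1+c_3)\, d_D(w).$$

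Finally, since any John constant satisfies $c_3 \ge 1$, we have $1+c_3 \le 2c_3$, and thus $d_D(w) \ge \frac{1}{1+c_3} d_D(x) \ge \frac{1}{2c_3} d_D(x)$, which is the ``moreover'' part. The first claim then follows at once by noting that $d_D(x) \ge \min\{d_D(x), d_D(y)\}$, and in the opposite case $\ell(\gamma[y,w]) \le \ell(\gamma[x,w])$ the symmetric argument yields $d_D(w) \ge \frac{1}{2c_3} d_D(y)$.

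There is no real obstacle here; the proof is a two-line computation combining the cone-arc inequality with the triangle inequality for $d_D$. The only minor subtlety is recognizing that $c_3 \ge 1$ (which is built into Definition \ref{def-1}, since cone-arcs must have $c \ge 1$), so that the bound $(1+c_3)^{-1}$ can be cleanly replaced by the uniform constant $(2c_3)^{-1}$ stated in the lemma.
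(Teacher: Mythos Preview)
Your proof is correct and uses the same two ingredients as the paper---the cone-arc inequality and the $1$-Lipschitz property of $d_D$---but packages them more efficiently. The paper instead bisects $\gamma$ at its arclength midpoint $w_0$ and, for $w\in\gamma[x,w_0]$, splits into two cases according to whether $\ell(\gamma[x,w])\le \tfrac12 d_D(x)$ or not: in the first case the Lipschitz estimate alone gives $d_D(w)\ge \tfrac12 d_D(x)$, in the second the cone inequality alone gives $d_D(w)\ge \tfrac{1}{c_3}\ell(\gamma[x,w])>\tfrac{1}{2c_3}d_D(x)$. Your single chain $d_D(x)\le d_D(w)+\ell(\gamma[x,w])\le (1+c_3)\,d_D(w)$ avoids the dichotomy and in fact yields the slightly sharper constant $1/(1+c_3)$.

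Two minor remarks. First, the ambient space is a general Banach space $E$, so write ``norm distance'' rather than ``Euclidean distance''. Second, Definition~\ref{def-1} does not literally stipulate $c\ge 1$; it is tacit throughout the paper (and the paper's own proof of this lemma needs it as well, since otherwise the case $\ell(\gamma[x,w])\le\tfrac12 d_D(x)$ only gives $d_D(w)\ge\tfrac12 d_D(x)$, not $\tfrac{1}{2c_3}d_D(x)$).
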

\begin{proof}
Let $w_0\in \gamma$ bisect the arclength of $\gamma.$ Obviously, we only need to consider the case $w\in \gamma[x, w_0]$ since the discussion for the case
$w\in \gamma[y, w_0]$ is similar.

If $\ell(\gamma[x,w])\leq \frac{1}{2}d_D(x)$, then
$$d_D(w)\geq d_D(x)-\ell(\gamma[x,w])\geq \frac{1}{2}d_D(x).$$

If $\ell(\gamma[x,w])> \frac{1}{2}d_D(x)$,
then  we have $$ d_D(w)\geq
\frac{1}{c_3}\ell(\gamma[x,w])> \frac{1}{2c_3}d_D(x).$$ The proof is complete.\end{proof}

Let us recall the following result from \cite{LW}.

\begin{lemma}  (\cite[Theorem 1.2]{LW}) \label{ThmD} Suppose that $D_1$ and $D_2$ are convex domains in $E$,
where $D_1$ is bounded and $D_2$ is $c$-uniform, and that there
exist $z_0\in D_1\cap D_2$ and $r>0$ such that $\mathbb{B}(z_0,r) \subset
D_1\cap D_2$. If there exist $R_1>0$ and a constant $c_0>0$ such
that $R_1 \leq c_0r$ and $D_1\subset \overline{\mathbb{B}}(z_0,R_1)$, then
$D_1\cup D_2$ is a $c'$-uniform domain with
$c'=\frac{1}{2}(c+1)(6c_0+1)+c$.\end{lemma}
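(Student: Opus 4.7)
The plan is to produce, for every pair $x,y\in D_1\cup D_2$, a single uniform arc by routing through the common ball $\mathbb{B}(z_0,r)$, exploiting the convexity of $D_1$ together with the $c$-uniformity of $D_2$. I would split into three cases: (A) $\{x,y\}\subset D_2$, (B) $\{x,y\}\subset D_1$, and (C) one point in $D_1\setminus D_2$ and the other in $D_2\setminus D_1$. Case (A) is immediate: a $c$-uniform arc $\alpha\subset D_2$ supplied by the uniformity of $D_2$ satisfies $d_{D_1\cup D_2}(z)\geq d_{D_2}(z)$ for every $z\in\alpha$, so $\alpha$ remains a $c$-uniform arc in the union, and $c$ is dominated by the target constant $c'$.

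For cases (B) and (C), the key idea is to pick an auxiliary point $\zeta\in\mathbb{B}(z_0,r/2)\subset D_1\cap D_2$ and concatenate. A preliminary but essential observation is that, since $D_1$ is convex, $d_{D_1}$ is concave along segments in $D_1$, so for $z=(1-t)p+tq$ with $p,q\in D_1$ one has
\[
d_{D_1}(z)\geq (1-t)\,d_{D_1}(p)+t\,d_{D_1}(q)\geq \min\{d_{D_1}(p),d_{D_1}(q)\}.
\]
Together with $R_1\leq c_0 r$, this bounds the length of any segment inside $D_1$ by $2R_1\leq 2c_0 r$, and controls $d_{D_1\cup D_2}$ from below by a constant multiple of $r$ along any such segment that passes within $r/2$ of $z_0$.

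In case (B) the segment $[x,y]$ already lies inside $D_1$; if it meets $\mathbb{B}(z_0,r/2)$ then \eqref{eq-1}--\eqref{eq-2} follow directly from $|x-y|\leq 2R_1\leq 2c_0 r$ and the concavity estimate. Otherwise I would replace $[x,y]$ by the broken line $\beta=[x,\zeta]\cup[\zeta,y]\subset D_1$ for a suitable $\zeta\in\mathbb{B}(z_0,r/2)$, yielding $d_{D_1\cup D_2}(z)\geq r/2$ near $\zeta$ and $d_{D_1\cup D_2}(z)\geq \min\{d_{D_1}(x),r/2\}$ in general. In case (C), I would join $x\in D_1$ to $\zeta$ by a segment inside $D_1$ (convexity) and then use the $c$-uniformity of $D_2$ to pick an arc $\gamma_2\subset D_2$ from $\zeta$ to $y$ satisfying \eqref{eq-1}--\eqref{eq-2} in $D_2$. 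The concatenation $\alpha=[x,\zeta]\cup\gamma_2$ is the candidate arc; its cone condition at a point $z$ is checked by splitting on whether $z\in[x,\zeta]$ (use the convexity estimate) or $z\in\gamma_2$ (use $c$-uniformity of $D_2$ and $d_{D_1\cup D_2}\geq d_{D_2}$).

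The main obstacle, and where the hypothesis $R_1\leq c_0 r$ enters quantitatively, is the bookkeeping for the cone condition at points $z$ near $\zeta$ on the $D_2$-side of the arc: one must argue that even though the $D_1$-side $[x,\zeta]$ may be as long as roughly $2c_0 r$, the quantity $d_{D_1\cup D_2}(z)$ is of order $r$ whenever $z$ is within $r/2$ of $z_0$, so that the ratio of shorter-subarc length to $d_{D_1\cup D_2}(z)$ is bounded by a universal multiple of $c_0$. Combining the estimates from the $D_1$ and $D_2$ portions and optimising the choice of $\zeta$ yields the explicit constant $c'=\tfrac{1}{2}(c+1)(6c_0+1)+c$: the prefactor $\tfrac{1}{2}(c+1)$ arises from averaging lengths against $d$ at the midpoint of the concatenated arc, the factor $6c_0+1$ tracks the worst case where one side traverses roughly $3c_0 r$ of $D_1$, and the additive $+c$ comes from the pure $D_2$ case.
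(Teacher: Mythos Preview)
This lemma is not proved in the present paper; it is quoted from \cite[Theorem~1.2]{LW} and used only as a black box to obtain Lemma~\ref{lw1-3}. There is therefore no proof here against which to compare your attempt.

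Your outline follows the natural route---case split according to which of $D_1,D_2$ contain the endpoints, routing mixed pairs through a point $\zeta\in\mathbb{B}(z_0,r/2)$, and exploiting the concavity of $d_{D_1}$ along segments---and is presumably close in spirit to the argument in \cite{LW}. However, as written it has a genuine gap: in Cases~(B) and~(C) you address the cone condition~\eqref{eq-1} but never the length condition~\eqref{eq-2}. The broken line $[x,\zeta]\cup[\zeta,y]$ (respectively $[x,\zeta]\cup\gamma_2$) has length of order $c_0r$ irrespective of $|x-y|$, so $\ell(\alpha)\le c'\,|x-y|$ fails whenever the endpoints lie outside $\mathbb{B}(z_0,r/2)$ yet are close to one another; your dichotomy ``does $[x,y]$ meet $\mathbb{B}(z_0,r/2)$?'' gives no lower bound on $|x-y|$ in the negative case. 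One standard repair is to push the midpoint of $[x,y]$ toward $z_0$ by an amount proportional to $|x-y|$ rather than all the way to a fixed $\zeta$; this keeps the length comparable to $|x-y|$ while the concavity estimate (applied along a segment toward $z_0$) still delivers the cone condition with a constant of order $c_0$. Finally, the closing paragraph that ``derives'' the exact value $c'=\tfrac12(c+1)(6c_0+1)+c$ is purely heuristic and would require the precise construction of \cite{LW} to justify.
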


By Lemma \ref{star},  \ref{ThmA} and \ref{ThmD}, we easily have the following lemma.

\begin{lemma}\label{lw1-3}Let $D\subset E$ be a domain. For $y_1,\; w_1\in D$, if $$\mathbb{B}(y_1,\frac{1}{32}d_D(y_1))\cap \mathbb{B}(w_1,\frac{1}{32}d_D(w_1))\not=\emptyset,$$ then $D_0\setminus P_D$ is a $660c_2^2$-uniform domain, where $D_0=\mathbb{B}(y_1,\frac{1}{16}d_D(y_1))\cup \mathbb{B}(w_1,\frac{1}{32}d_D(w_1))$. \end{lemma}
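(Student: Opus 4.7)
The plan is to reduce to Lemma \ref{ThmD} applied to the two convex balls $B_1 = \mathbb{B}(y_1, r_1)$ and $B_2 = \mathbb{B}(w_1, r_2)$, where $r_1 = \frac{1}{16}d_D(y_1)$ and $r_2 = \frac{1}{32}d_D(w_1)$, and then to account for the removal of $P_D$ using Lemma \ref{lem3-1} and Lemma \ref{ThmA}. First I would establish that the radii are comparable: the overlap hypothesis gives $|y_1 - w_1| < \frac{1}{32}(d_D(y_1) + d_D(w_1))$, and since $d_D$ is $1$-Lipschitz, this yields $\frac{31}{33} \leq d_D(w_1)/d_D(y_1) \leq \frac{33}{31}$, hence $r_2 < r_1 < 3 r_2$.

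Next, to apply Lemma \ref{ThmD} with $D_1 = B_2$ (convex, bounded) and $D_2 = B_1$ (convex, $2$-uniform since any ball is $2$-uniform), I would pick $z_0 \in [y_1, w_1]$ with $|z_0 - w_1| = \min(|y_1 - w_1|, r_2/2)$ and set $r = r_2/4$. A short computation using the comparability above shows that $\mathbb{B}(z_0, r) \subset B_1 \cap B_2$ while $B_2 \subset \overline{\mathbb{B}}(z_0, \tfrac{3}{2} r_2)$, so $R_1 \leq 6 r$. Lemma \ref{ThmD} then makes $D_0 = B_1 \cup B_2$ itself $c'$-uniform with $c'$ an absolute constant of size around $58$.

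Finally, by Lemma \ref{star} applied at $y_1$ and at $w_1$, each $B_i$ contains at most one point of $P_D$, and by Lemma \ref{lem3-1} each $B_i \setminus P_D$ is $c_2$-uniform. Given an arbitrary pair $u, v \in D_0 \setminus P_D$, I would use Lemma \ref{ThmA}: if $u, v$ lie in the same $B_i$, take a $c_2$-neargeodesic inside $B_i \setminus P_D$; in the crossing case, pick a bridge point $z \in (B_1 \cap B_2) \setminus P_D$ (nonempty as $B_1 \cap B_2$ is open and $P_D$ countable) and concatenate a $c_2$-neargeodesic from $u$ to $z$ in $B_1 \setminus P_D$ with one from $z$ to $v$ in $B_2 \setminus P_D$. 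The main obstacle lies in verifying the uniform constant of this concatenated arc: the bridge $z$ must be chosen so that $d_D(z)$ is comparable to both $r_1$ and $r_2$, allowing both the cone condition \eqref{eq-1} and the length bound \eqref{eq-2} to pass through it. The multiplicative compounding of two $c_2$-uniform arcs at the bridge produces the $c_2^2$ factor, while the absolute constant from Lemma \ref{ThmD} together with the $P_D$-avoidance margin absorb into the coefficient $660$.
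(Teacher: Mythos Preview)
Your first two steps — comparing the radii and applying Lemma~\ref{ThmD} to obtain that $D_0 = B_1 \cup B_2$ is $c'$-uniform with an absolute constant $c'$ — are correct and match the paper's intended route. The difficulty is in your final step. Having established that $D_0$ is uniform, you abandon this and instead try to build uniform arcs in $D_0\setminus P_D$ by concatenating $c_2$-uniform arcs from the individual punctured balls $B_i\setminus P_D$ across a bridge point. You yourself flag the ``main obstacle'': controlling both the cone condition and the length bound through the bridge. This is a genuine gap. If $u,v$ are close to each other but both far from $B_1\cap B_2$, then any bridge $z\in B_1\cap B_2$ forces $|u-z|+|z-v|$ to be large compared to $|u-v|$, and the length bound \eqref{eq-2} fails with any fixed constant. (Also, minor point: the arcs you want are $c_2$-\emph{uniform} arcs, not $c_2$-neargeodesics; the latter are defined for the quasihyperbolic metric and Lemma~\ref{ThmA} is not what you need here.)

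The fix is much shorter than your concatenation plan, and it actually uses the uniformity of $D_0$ that you already proved. From the radius comparison $\frac{31}{33}\le d_D(w_1)/d_D(y_1)\le\frac{33}{31}$ and $|y_1-w_1|<\frac{2}{31}d_D(y_1)$, one checks directly that $B_2\subset \mathbb{B}(y_1,\tfrac{1}{6}d_D(y_1))$, hence $D_0\subset \mathbb{B}(y_1,\tfrac{1}{6}d_D(y_1))$. By Lemma~\ref{star} this ball, and therefore $D_0$, contains at most \emph{one} point of $P_D$ (not two). Now apply Lemma~\ref{ThmC} once to the $c'$-uniform domain $D_0$: removing that single point gives a $9c'$-uniform domain, which is $D_0\setminus P_D$. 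Alternatively, one may feed the punctured ball (which is $c_2$-uniform) as the $D_2$ in Lemma~\ref{ThmD} and the other ball as the convex $D_1$; either way the argument closes without any arc-gluing. The stated constant $660c_2^2$ is simply a generous upper bound convenient for later use in Lemma~\ref{coro}.
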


\section{Stability of John domains and an application}

Before the formulation of our main theorem, we prove a key lemma.

\begin{lemma}\label{coro}Let $D\subset E$ be a domain. For $z_1, z_2\in G=D\setminus P_D,$ let $\gamma$ be a rectifiable arc joining $z_1$ and $z_2$ in $D$. Then there exists an arc $\alpha\subset G$ joining $z_1$ and $z_2$ such that $\ell(\alpha)\leq 660c_2^2 \ell(\gamma)$. Moreover, if $\gamma$ is a $c$-cone arc in $D$, then $\alpha$ is a $(2^{18}c c_2^3+660c_2^2)$-cone arc in $G$, where $c>1$ is a constant and $c_2$ is the  constant from Lemma \ref{lem3-1}.
\end{lemma}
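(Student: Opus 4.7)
The plan is to build $\alpha$ as a concatenation of short uniform arcs, each contained in a local patch of $G$ supplied by Lemma \ref{lw1-3}. Since $\gamma$ is compact and $d_D$ is continuous and positive on $\gamma$, one can select in order a finite chain $w_0=z_1,w_1,\ldots,w_n=z_2$ on $\gamma$ with $|w_{i+1}-w_i|\le\tfrac{1}{32}\min\{d_D(w_i),d_D(w_{i+1})\}$; because $P_D$ is countable, each interior $w_i$ may be perturbed slightly along $\gamma$ to lie in $G$. Setting $D_i=\mathbb{B}(w_i,\tfrac{1}{16}d_D(w_i))\cup \mathbb{B}(w_{i+1},\tfrac{1}{32}d_D(w_{i+1}))$, Lemma \ref{lw1-3} says $D_i\setminus P_D$ is $660c_2^2$-uniform, so there exists a $660c_2^2$-uniform arc $\alpha_i\subset D_i\setminus P_D$ from $w_i$ to $w_{i+1}$. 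Put $\alpha=\alpha_0\cup\cdots\cup\alpha_{n-1}$. The quasiconvexity of each $\alpha_i$ gives $\ell(\alpha_i)\le 660c_2^2|w_{i+1}-w_i|\le 660c_2^2\,\ell(\gamma[w_i,w_{i+1}])$, and summing yields $\ell(\alpha)\le 660c_2^2\,\ell(\gamma)$, which is the first assertion.

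For the cone estimate, assume $\gamma$ is a $c$-cone arc and fix $w\in\alpha_i$. The cone property of the $660c_2^2$-uniform arc $\alpha_i$ inside $D_i\setminus P_D$ gives
\[
\min\{\ell(\alpha_i[w_i,w]),\,\ell(\alpha_i[w_{i+1},w])\}\le 660c_2^2\,d_{D_i\setminus P_D}(w)\le 660c_2^2\,d_G(w),
\]
and WLOG the first term is the smaller. Summing the length estimates of the preceding $\alpha_j$ yields $\ell(\alpha[z_1,w_i])\le 660c_2^2\,\ell(\gamma[z_1,w_i])$ (and similarly on the $z_2$-side); combined with the $c$-cone property of $\gamma$ at $w_i$, this gives $\min\{\ell(\alpha[z_1,w_i]),\ell(\alpha[z_2,w_i])\}\le 660c_2^2\,c\,d_D(w_i)$. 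Adding the two bounds along $\alpha$ reduces everything to controlling $d_D(w_i)$ by a constant multiple of $d_G(w)$.

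This last estimate is the main obstacle, and also the source of the $c_2^3$ factor. The easy regime is $\dist(w,P_D)\ge\tfrac{1}{6}d_D(w)$: Lemma \ref{star} forces $d_G(w)\ge\tfrac{1}{6}d_D(w)$, while $w\in D_i$ makes $d_D(w)$ comparable to $d_D(w_i)$ by a triangle-inequality computation, and the required bound follows. The hard regime is when $w$ is close to a point $p\in P_D\cap D_i$, which is unique by Lemma \ref{star}; here $d_G(w)=|w-p|$ can be arbitrarily small. The approach is to apply the cone property of $\alpha_i$ at its closest approach $u^*$ to $p$, which forces $|u-p|\ge(660c_2^2+1)^{-1}\min\{|w_i-p|,|w_{i+1}-p|\}$ for every $u\in\alpha_i$, and then to invoke the sharper $c_2$-uniformity of a single ball minus $P_D$ (as in Lemma \ref{lem3-1}) in place of the $660c_2^2$-uniformity of the union $D_i\setminus P_D$, gaining one extra factor of $c_2$. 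Combined with refining the choice of the $w_i$ in Step 1 so that $|w_i-p|$ stays proportional to $d_D(w_i)$ whenever $p\in D_i$, this produces an inequality $d_D(w_i)\le C\,c_2\,d_G(w)$ with $C$ absolute; substituting back into the two bounds from paragraph two yields the advertised $(2^{18}c\,c_2^3+660c_2^2)$-cone constant and completes the proof.
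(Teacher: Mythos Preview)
Your length estimate is correct, and your overall framework---patching local uniform arcs supplied by Lemmas~\ref{lem3-1} and~\ref{lw1-3}---matches the paper's. The gap is in the cone estimate, and it lies precisely in the step you label ``refining the choice of the $w_i$''.

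You correctly reduce the cone bound to showing $d_D(w_i)\le C\,d_G(w)$ for $w\in\alpha_i$, and (via Lemma~\ref{lem3-2} applied to the $660c_2^2$-uniform arc $\alpha_i$ in $D_i\setminus P_D$, or via your closest-approach argument at $u^*$) this boils down to ensuring $|w_i-p|\ge \epsilon\,d_D(w_i)$ whenever $p\in P_D\cap D_i$. But you have not shown that one can choose the subdivision points with this property \emph{and} with the spacing $|w_{i+1}-w_i|\le\tfrac{1}{32}\min\{d_D(w_i),d_D(w_{i+1})\}$. If $\gamma$ dives deep toward some $p\in P_D$, every point of $\gamma$ in that stretch violates $d_G\gtrsim d_D$, so consecutive $w_i,w_{i+1}$ must straddle the entire bad region in one step; arranging this systematically, checking that the jump endpoints land in $G$ with the right distance ratio, and showing the process terminates, is the actual work. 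Your two proposed fixes (the $u^*$ argument and the appeal to Lemma~\ref{lem3-1} for an extra $c_2$) both presuppose that this refinement has already been carried out.

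The paper does this explicitly. It sets $U=\{u\in\gamma:d_D(u)>64\,d_G(u)\}$ and keeps $\gamma$ intact outside $U$ (there $d_G\ge\tfrac{1}{128}d_D$, so the $c$-cone condition in $D$ becomes a $128c$-cone condition in $G$ for free). Each excursion into $U$ is bridged by a single $c_2$-uniform arc in one ball $\mathbb{B}(y_{2j-1},\tfrac16 d_D(y_{2j-1}))\setminus P_D$, with $y_{2j-1}$ the first point where $d_D=128\,d_G$ and $y_{2j}$ the last crossing of $\mathbb{S}(y_{2j-1},\tfrac{1}{32}d_D(y_{2j-1}))$; Lemma~\ref{star} then forces $d_G(y_{2j})\ge\tfrac{1}{66}d_D(y_{2j})$, which is exactly the refinement you need, obtained constructively rather than by perturbation. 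Finiteness of the sequence is controlled by $\ell_{k_D}(\gamma)$. This construction (Claims~\ref{cl-1}--\ref{cl-3}) is the missing ingredient in your sketch, and it also explains the exponent $c_2^3$ rather than the $c_2^4$ your uniform use of Lemma~\ref{lw1-3} would produce: almost all bridging arcs are $c_2$-uniform (single ball, Lemma~\ref{lem3-1}); the $660c_2^2$-uniform arc from Lemma~\ref{lw1-3} is invoked only once, at the final junction near the target endpoint.
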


\begin{proof} For given $z_1$ and $z_2$ in $G$, let $\gamma$ be a rectifiable arc joining $z_1$ and $z_2$ in $D$ and let $$U=\{u\in \gamma: d_D(u)>64 d_G(u)\}.$$ If $U=\emptyset$, then let $\alpha_0=\gamma$. Obviously, Lemma \ref{coro} holds.

In the following, we assume that $U\not=\emptyset$. We prove this lemma by considering three cases.

\begin{case}\label{Ca1} There exists some point $w_0\in \gamma$ such that $\{z_1,z_2\}\subset \overline{\mathbb{B}}(w_0, \frac{1}{32}d_D(w_0))$.\end{case}

Then by Lemma \ref{lem3-1}, we know that there is a $c_2$-uniform arc $\alpha_1$ joining $z_1$ and $z_2$ in $G$ which is the desired since $$\ell(\alpha_1)\leq c_2|z_1-z_2|\leq c_2\ell(\gamma).$$

Let $z_0 \in \gamma$ be a point such that
 $\ell(\gamma[z_1,z_0])= \ell(\gamma[z_2,z_0])\,.$
\begin{case}\label{Ca2} For all $w\in \gamma$, $\{z_1,z_2\}\nsubseteq \overline{\mathbb{B}}(w, \frac{1}{32}d_D(w))$, but there is a point $w_1\in \gamma[z_1, z_0]$ such that $z_2\in \overline{\mathbb{B}}(w_1, \frac{1}{32}d_D(w_1))$ or a point $w_2\in \gamma[z_2, z_0]$ such that $z_1\in \overline{\mathbb{B}}(w_2, \frac{1}{32}d_D(w_2))$.
\end{case}

Obviously, we only need to consider the former case since the discussion for the latter case is similar. Without loss of generality, we may assume that $w_1$ is the first point in $\gamma[z_1,z_0]$ along the direction from $z_1$ to $z_0$ such that $z_2\in \overline{\mathbb{B}}(w_1, \frac{1}{32}d_D(w_1))$.

\begin{subcase}\label{Sca1} $U\cap \gamma[z_1,w_1]=\emptyset$.
\end{subcase}

 That is, for all $w\in \gamma[z_1,w_1]$,  $d_D(w)\leq 64d_G(w)$. By Lemma \ref{lem3-1}, there exists a $c_2$-uniform arc $\eta_1$ joining $w_1$ and $z_2$ in $G$. Then we come to prove that $\alpha_2=\gamma[z_1,w_1]\cup\eta_1$ is the desired arc.
By the choice of $\eta_1$, we know that $$\ell(\alpha_2)\leq c_2|w_1-z_2|+\ell(\gamma[z_1,w_1])\leq c_2\ell(\gamma).$$

Assume further that $\gamma$ is a $c$-cone arc. Then we let $u_0$ bisect the arclength of $\eta_1$. If $w\in \gamma[z_1,w_1]$, then $$\ell(\alpha_2[z_1,w])=\ell(\gamma[z_1,w])\leq cd_D(w)\leq 64c d_G(w).$$ If $w\in \eta_1[w_1,u_0]$, then Lemma \ref{lem3-2} yields $$\ell(\alpha_2[z_1,w])=\ell(\gamma[z_1,w_1])+\ell(\eta_1[w_1,w])\leq 64cd_G(w_1)+c_2d_G(w)\leq (128c +1)c_2 d_G(w).$$ If $w\in \eta_1[u_0,z_2]$, then $$\ell(\alpha[z_2,w])=\ell(\eta_1[z_2,w])\leq c_2d_G(w).$$
Hence $\alpha_2$ is the desired.

\begin{subcase}\label{Sca2} $U\cap \gamma[z_1,w_1]\not=\emptyset$.
\end{subcase}

If $z_1\in U$, then let $y_1=z_1$. Otherwise, let $y_1$ be the first point in $\gamma[z_1,w_1]$ along the direction from $z_1$ to $w_1$ such that \begin{equation}\label{eq-new-1}d_D(y_1)=64d_G(y_1).\end{equation} We first consider the case: $$\mathbb{B}(y_1,\frac{1}{32}d_D(y_1))\cap \mathbb{B}(w_1,\frac{1}{32}d_D(w_1))\not=\emptyset.$$ By Lemma \ref{lw1-3}, we know that there is a $660 c_2^2$-uniform arc $\eta_2$  joining $y_1$ and $z_2$ in $G$, then let $\alpha_3=\gamma[z_1,y_1]\cup \eta_2$. Here and in the following, we assume that $\gamma[z_1,y_1]=\{z_1\}$ if $z_1=y_1$.

If $y_1=z_1$, then $\alpha_3=\eta_2$, and obviously, it satisfies Lemma \ref{coro}.
If $y_1\not=z_1$, then replacing $c_2$ by $660 c_2^2$, similar arguments as in Subcase \ref{Sca1} show that $\alpha_3$ is the desired.

In the following, we assume $$\mathbb{B}(y_1,\frac{1}{32}d_D(y_1))\cap \mathbb{B}(w_1,\frac{1}{32}d_D(w_1))=\emptyset$$ and we come to construct an arc $\alpha_4$ satisfying the lemma. We first show the following claim.

\begin{claim}\label{cl-1} There exists a sequence of points $\{y_i\}^{p_1}_{i=1}$ in
$\gamma$, where $p_1\geq 3$ is an odd number, satisfying the following conditions. \begin{enumerate}
\item $y_1=z_1$ or $y_1$ is first point in $\gamma[z_1, w_1]$ from $z_1$ to $w_1$ such that $d_D(y_1)= 64d_G(y_1);$

\item  For each even number $j\in \{1,2,\ldots,p_1\}$, $d_G(y_j)\geq \frac{1}{66}d_D(y_j)$ and $d_D(y_{p_1})\leq 128d_G(y_{p_1})$;

\item If $p_1\geq 5$, then for each even number $j\in \{1,2,\ldots,p_1-2\}$, $y_{j+1}$ is the first point in $\gamma[y_j,w_1]$ from $y_j$ to $w_1$ such that $d_D(y_{j+1})=128d_G(y_{j+1})$;

  \item $p_1$ is the smallest integer with $y_{p_1}\in \mathbb{S}(w_1,\frac{1}{32}d_D(w_1))$ or $\mathbb{B}(y_{p_1},\frac{1}{32}d_D(y_{p_1}))\cap \mathbb{B}(w_1,\frac{1}{32}d_D(w_1))\not=\emptyset$(see Figures \ref{fig1} and \ref{fig1'}).

\end{enumerate}\end{claim}

 For a proof, we let $y_2\in\gamma[y_1,w_1]\cap \mathbb{S}(y_1, \frac{1}{32}d_D(y_1)$ be such that
$$\gamma(y_2,w_1]\cap\mathbb{S}(y_1, \frac{1}{32}d_D(y_1))=\emptyset,$$
where $\gamma(y_2,w_1]$ denote the part $\gamma$ from $y_2$ to $w_1$ such that $y_2\notin \gamma[y_2,w_1]$. Then \begin{equation}\label{eq-33'}d_D(y_2)\leq d_D(y_1)+|y_1-y_2|= \frac{33}{32}d_D(y_1).\end{equation}
By Lemma \ref{star} and \eqref{eq-new-1}, we know that there exists one and only one point, say $x_s$, in $\mathbb{B}(y_1, \frac{1}{6}d_D(y_1))\cap P_D$,  and so
$$d_G(y_2)=|y_2-x_s|\geq |y_2-y_1|-|y_1-x_s|\geq
\frac{1}{64}d_D(y_1),$$

\noindent which, together with \eqref{eq-33'}, shows that
\begin{equation}\label{eq3}d_G(y_2)\geq \frac{1}{66}d_D(y_2).\end{equation}

\begin{figure}[!ht]
\includegraphics{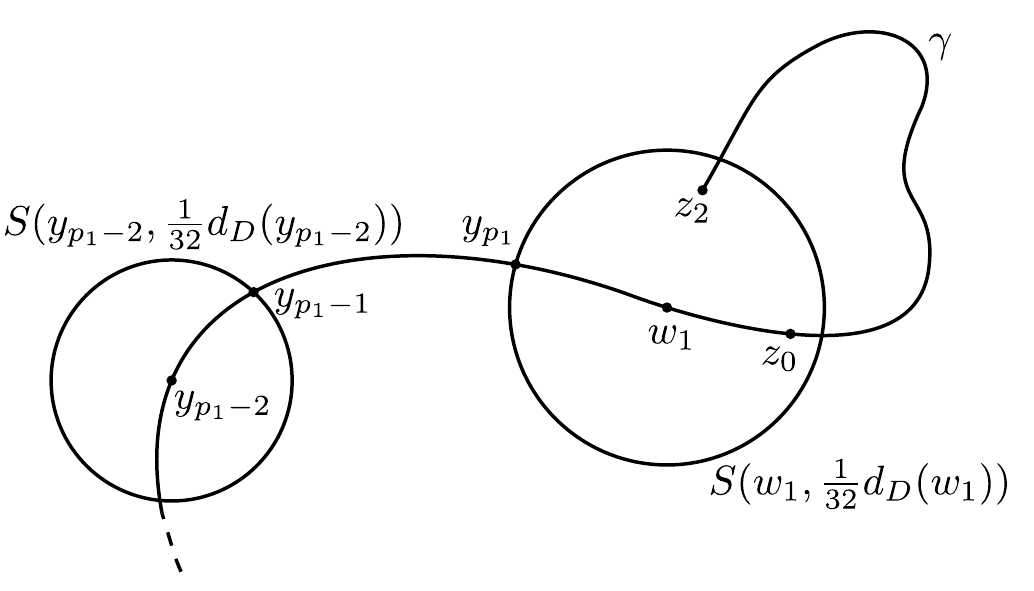} 
\caption{For all $w\in \gamma[y_{p_1-1},w_1]\setminus \mathbb{B}(w_1,\frac{1}{32}d_D(w_1))$, $d_D(w)\leq 128d_G(w)$\label{fig1}}
\end{figure}
\begin{figure}[!ht]
\includegraphics{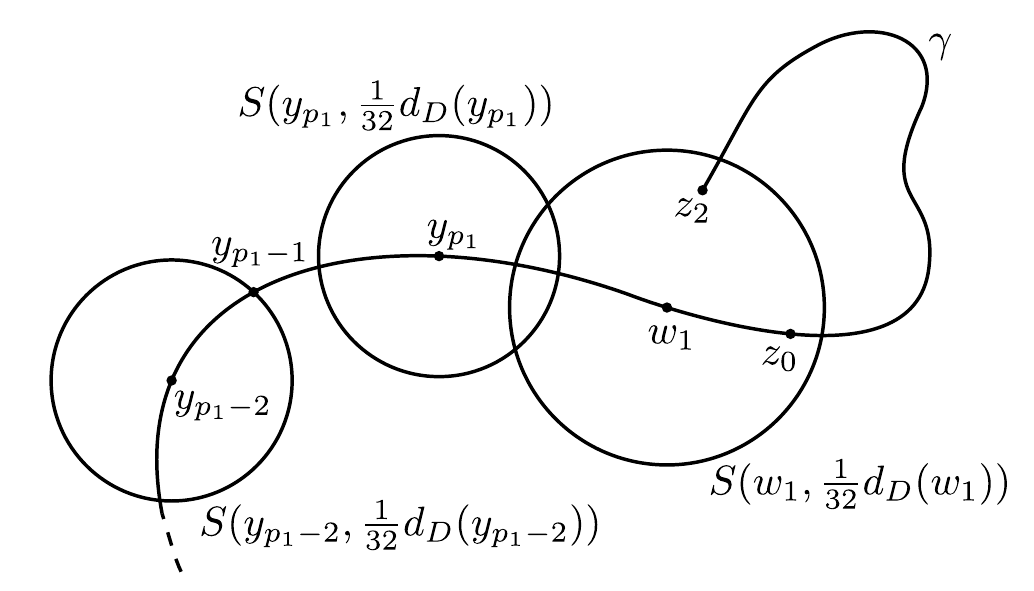}
\caption{$\mathbb{B}(y_{p_1},\frac{1}{32}d_D(y_{p_1}))\cap \mathbb{B}(w_1,\frac{1}{32}d_D(w_1))\not=\emptyset$\label{fig1'}}
\end{figure}
If for all $w\in \gamma[y_2,w_1]\setminus \mathbb{B}(w_1,\frac{1}{32}d_D(w_1))$, $d_D(w)\leq 128d_G(w)$, then the claim obviously holds by letting $y_3\in \gamma[y_2,w_1]\cap \mathbb{S}(w_1,\frac{1}{32}d_D(w_1)$ and $p_1=3$.

If there is some  $w_0\in \gamma[y_2,w_1]\setminus \mathbb{B}(w_1,\frac{1}{32}d_D(w_1)$ such that $d_D(w_0)> 128d_G(w_0)$, then let $y_3$ be the first point in $\gamma[y_2,w_1]$ from $y_2$ to $w_1$ such that $d_D(y_3)=128d_G(y_3)$. If $\mathbb{B}(y_3,\frac{1}{32}d_D(y_3))\cap \mathbb{B}(w_1,\frac{1}{32}d_D(w_1))\not=\emptyset$, then the claim holds and $p_1=3$. Otherwise, let $y_4\in\gamma[y_3,w_1]\cap \mathbb{S}(y_3, \frac{1}{32}d_D(y_3)$ be such that
$$\gamma(y_4,w_1]\cap\mathbb{S}(y_3, \frac{1}{32}d_D(y_3))=\emptyset.$$

Then by Lemma \ref{star}, and a similar argument as in the proof of  \eqref{eq3}, we have
$$d_G(y_4)\geq \frac{1}{66}d_D(y_4).$$

If for all $w\in \gamma[y_4,w_1]\setminus \mathbb{B}(w_1,\frac{1}{32}d_D(w_1))$, $d_D(w)\leq 128d_G(w)$, then we complete the proof of the claim by letting $y_5\in \gamma[y_4,w_1]\cap \mathbb{S}(w_1,\frac{1}{32}d_D(w_1)$.
$\cdots$.
%

 By repeating this process for finite steps, we get a sequence $\{y_i\}^{p_1}_{i=1}\in
\gamma$ satisfying Claim \ref{cl-1}, where $p_1<\frac{M}{\log\frac{33}{32}}$, since for each $i\in\{1,2,\ldots,\frac{p_1-1}{2}\}$, $$\ell_{k_D}(\gamma[y_{2i-1}, y_{2i}])\geq \log\big(1+\frac{|y_{2i-1}-y_{2i}|}{d_D(y_{2i-1})}\big)=\log\frac{33}{32},$$ and $M=\ell_{k_D}(\gamma[z_1,z_2])$.
  Hence Claim \ref{cl-1} holds.\medskip

 We continue the construction of $\alpha_4$. Let $\gamma_1=\gamma[z_1,y_1]$ and for each $j\in \{2, \ldots, \frac{p_1+1}{2}\}$, let $\gamma_j=\gamma[y_{2j-2},y_{2j-1}]$. By Lemma \ref{lem3-1}, we know that for each $j\in \{1,2,\ldots,\frac{p_1-1}{2}\}$,
there exists a $c_2$-uniform arc $\beta_{j}\subset G$ joining $y_{2j-1}$ and
$y_{2j}$. By Lemmas \ref{lem3-1} and \ref{lw1-3}, there exists a $660c_2^2$-uniform arc $\eta_3$ joining $y_{p_1}$ and $z_2$. Take
 $$\alpha_4=\gamma_1\cup\beta_1\cup\gamma_2\cup\ldots\cup\beta_{\frac{p_1-1}{2}}\cup\gamma_{\frac{p_1+1}{2}}\cup\eta_3.$$ Now, we come to show that $\alpha_4$ is the desired arc.

First observe that \begin{equation}\label{neweq-1}\ell(\alpha_4)\leq 660c^2_2\ell(\gamma).\end{equation}
To prove that $\alpha_4$ is a cone arc in $G$, it is enough to show that $$\min\{\ell(\alpha_4[z_1,w]),\ell(\alpha_4[w,z_2])\}\leq (2^{18}c c_2^3+660 c_2^2) d_G(w).$$

 If $w\in \gamma_1\cup \gamma_2\cup\ldots\cup\gamma_{\frac{p_1+1}{2}}$,  then from the assumption, Claim \ref{cl-1} and \eqref{neweq-1}, the above inequality obviously holds.

For the case where $w\in \beta_1\cup\ldots \cup \beta_{\frac{p_1-1}{2}}$, we see that there exists some $i\in \{1,2,\ldots,\frac{p_1-1}{2}\}$ such that  $w\in \beta_i$.
By Claim \ref{cl-1},
$$d_G(y_{2i})\geq \frac{1}{66}d_D(y_{2i})\geq\frac{1}{66}(d_{D}(y_{2i-1})-|y_{2i-1}-y_{2i}|)\geq \frac{62}{33}d_G(y_{2i-1}),$$ whence Lemma \ref{lem3-2} yields \begin{equation}\label{neweq-6}d_G(w)\geq \frac{1}{2c_2}\min\{d_G(y_{2i-1}),d_G(y_{2i})\} =\frac{1}{2c_2}d_G(y_{2i-1}),\end{equation} which, together with Claim \ref{cl-1}, leads to
\begin{eqnarray*}\ell(\alpha_4[z_1,w])&\leq&  c_2\ell(\gamma[z_1,y_{2i-1}])+c_2|y_{2i-1}-y_{2i}|\\ &\leq& 4c_2(32c +1)d_G(y_{2i-1})\leq 8 c^2_2(32c+1)d_G(w).\end{eqnarray*}

For the remaining case where $w\in \eta_3$, we let $u_0$ bisect the arclength of $\eta_3$. If $w\in \eta_3[z_2,u_0]$, then, obviously, $$\ell(\alpha_4[z_2,w])\leq c_2d_G(w).$$
If $w\in \eta_3[y_3,u_0]$, then by Lemma \ref{lem3-2} and Claim \ref{cl-1}, we have \begin{eqnarray*}\ell(\alpha_4[z_1,w])&\leq & c_2\ell(\gamma[z_1,y_{p_1}])+\ell(\eta_3[y_{p_1},w])\\ &\leq& 128c c_2d_G(y_{p_1})+660 c^2_2d_G(w)\leq (2^{18}c c_2^3+660 c^2_2)d_G(w).\end{eqnarray*}

\begin{case}\label{Ca3} $z_1\notin \bigcup_{w\in \gamma[z_2,z_0]}\overline{\mathbb{B}}(w, \frac{1}{32}d_D(w))$ and $z_2\notin \bigcup_{w\in \gamma[z_1,z_0]}\overline{\mathbb{B}}(w, \frac{1}{32}d_D(w))$.
\end{case}

We may assume that $U\cap \gamma[z_1,z_0]\not=\emptyset$. Then by similar discussions as in the proof of Claim \ref{cl-1}, we get the following Claim.

\begin{claim}\label{cl-2} There exists a sequence of points $\{u_i\}^{p_2}_{i=1}$ in $\gamma$, where $p_2\geq 2$ is an integer, satisfying the following conditions.  \begin{enumerate}
\item $u_1=z_1$ or $u_1$ is first point in $\gamma[z_1, z_0]$ from $z_1$ to $z_0$ such that $d_D(u_1)= 64d_G(u_1);$

\item  For each even number $j\in \{1,2,\ldots,p_2\}$, $d_D(u_j)\leq 66d_G(u_j)$ and if $p_2$ is an odd number, then $d_D(u_{p_2})\leq 128d_G(u_{p_2})$;

\item If $p_2\geq 4$, then for each even number $j\in \{1,2,\ldots,p_2-2\}$, $u_{j+1}$ is the first point in $\gamma[u_j,z_2]$ from $u_j$ to $z_2$ such that $d_D(u_{j+1})=128d_G(u_{j+1})$;

  \item $p_2$ is the smallest integer such that $u_{p_2}=z_0$ or $z_0\in \overline{\mathbb{B}}(u_{p_2-1},\frac{1}{32}d_D(u_{p_2-1}))$.

\end{enumerate}\end{claim}

\begin{figure}[!ht]
\includegraphics[width=0.55\textwidth]{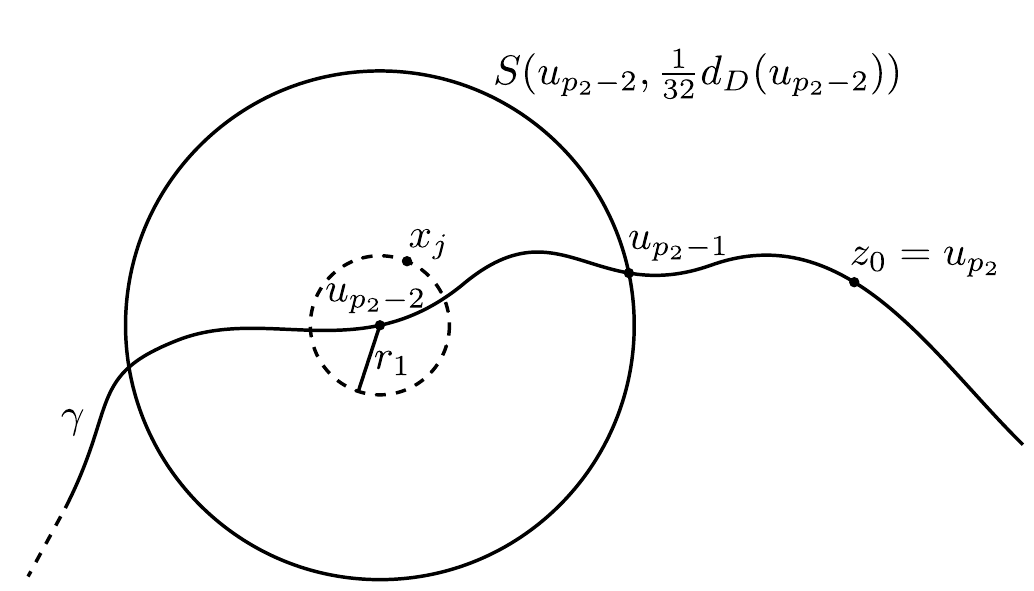} 
\caption{$r_1=\frac{1}{128}d_D(u_{p_2-2})$ and $x_j\in P_D$\label{fig2}}
\end{figure}
\begin{figure}[!ht]
\includegraphics[width=0.65\textwidth]{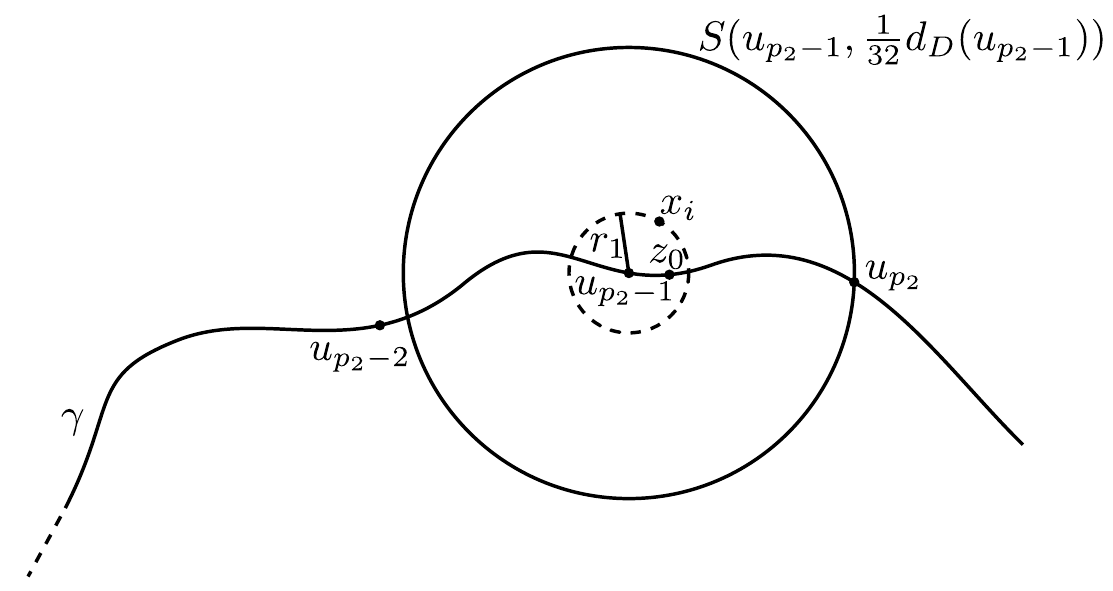}
\caption{$r_1=\frac{1}{128}d_D(u_{p_2-1})$ and $x_i\in P_D$\label{fig3}}
\end{figure}

We note that there are two possibilities for  $u_{p_2}$ (see Figures \ref{fig2} and \ref{fig3}) : One is $u_{p_2}=z_0$ and for all $w\in \gamma[u_{p_2-1},u_{p_2}]$, $d_D(w)\leq 128 d_G(w)$; and the other is $u_{p_2}\in \overline{\mathbb{B}}(u_{p_2-1},\frac{1}{32}d_D(u_{p_2-1}))\cap \gamma[z_0,z_2]$. No matter in which case, the proof is similar. So, in the following, we assume that $u_{p_2}\in \overline{\mathbb{B}}(u_{p_2-1},\frac{1}{32}d_D(u_{p_2-1}))\cap \gamma[z_0,z_2]$. Then $p_2$ is an even number, and by Claim \ref{cl-2}, we note that \begin{equation}\label{neweq-5}d_D(u_{p_2})\leq 66 d_G(u_{p_2}).\end{equation} We assume that $U\cap \gamma[z_2,u_{p_2}]\not=\emptyset$. Then by similar discussions as in the proof of Claim \ref{cl-1}, we also get the following claim.

\begin{claim}\label{cl-3} There exists a sequence of points $\{v_i\}^{p_3}_{i=1}$ in
 $\gamma[z_2,u_{p_2}]$, where $p_3\geq 2$ is an integer, satisfying the following conditions.  \begin{enumerate}
\item $v_1=z_2$ or $v_1$ is the first point in $\gamma[z_2, u_{p_2}]$ from $z_2$ to $u_{p_2}$ such that $d_D(v_1)= 64d_G(v_1);$

\item  For each even number $j\in \{1,2,\ldots,p_3\}$, $d_D(v_j)\leq 66d_G(v_j)$ and  $d_D(u_{p_3})\leq 66d_G(u_{p_3})$;

\item If $p_3\geq 4$, then for each even number $j\in \{1,2,\ldots,p_3-2\}$, $v_{j+1}$ is the first point in $\gamma[v_j,u_{p_2}]$ from $v_j$ to $u_{p_2}$ such that $d_D(v_{j+1})=128d_G(v_{j+1})$;

  \item $p_3$ is the smallest integer such that $u_{p_2}=v_{p_3}$ or $u_{p_2}\in \overline{\mathbb{B}}(v_{p_3-1},\frac{1}{32}d_D(v_{p_3-1}))$.

\end{enumerate}\end{claim}

We note that what we consider here is the case when $p_2$ is an even number, then by using the similar method as in the discussion of Case \ref{Ca2}, together with Lemma \ref{lem3-1}, Claims \ref{cl-2} and  \ref{cl-3}, we construct an arc $\alpha_5=\alpha_5'\cup\alpha^{''}_5$ in $G$ such that $$\alpha_5'=\gamma[z_1,u_1]\cup\beta_{1,1}[u_1,u_2]\cup\ldots\cup\beta_{1,i}[u_{2i-1}, u_{2i}]\cup\gamma[u_{2i},u_{2i+1}]\cup\ldots\cup\beta_{1,\frac{p_2}{2}}[u_{p_2-1},u_{p_2}]$$ and if $p_3$ is an odd number, then $$\alpha_5^{''}=\gamma[z_2,v_1]\cup\beta_{2,1}[v_1,v_2]\cup\ldots\cup\beta_{2,i}[v_{2i-1},v_{2i}]\cup\gamma[v_{2i},v_{2i+1}]\cup\ldots\cup\gamma[v_{p_3-1},u_{p_2}];$$ if $p_3$ is en even number, then $$\alpha_5^{''}=\gamma[z_2,v_1]\cup\beta_{2,1}[v_1,v_2]\cup\ldots\cup\beta_{2,i}[v_{2i-1},v_{2i}]\cup\gamma[v_{2i},v_{2i+1}]\cup\ldots\cup\beta_{2,\frac{p_3}{2}}[v_{p_3-1},u_{p_2}],$$ where $\beta_{1,i}[u_{2i-1}, u_{2i}]$ and $\beta_{2,i}[v_{2i-1},v_{2i}]$ are $c_2$-uniform arcs in $G$.

Obviously, $$\ell(\alpha_5)\leq c_2 \ell(\gamma).$$ Moreover, if $\gamma$ is a $c$-cone arc, we prove that for all $w\in \alpha_5$, $$\min\{\ell(\alpha_5[z_1,w]),\ell(\alpha_5[w,z_2])\}\leq 8 c^2_2(32c+1) d_G(w).$$ To show this, we only need to consider the case $w\in \alpha_5'$.

If $w\in \gamma\cap\alpha_5'$, then $$\ell(\alpha_5[z_1,w])\leq c_2\ell(\gamma[z_1,w])\leq 128c c_2d_G(w).$$

If $w\in \beta_{1,1}[u_1,u_2]\cup \ldots\cup\beta_{1,\frac{p_2}{2}}[u_{p_2-1},u_{p_2}]$, then there exists some $i\in\{1,2,\ldots,\frac{p_2}{2}\}$ such that $w\in \beta_{1,i}$. Hence by Lemma \ref{lem3-2}, Claim \ref{cl-2}, \eqref{neweq-6} and \eqref{neweq-5}, we have for all $w\in \beta_{1,i}[u_{2i-1},u_{2i}]$, \begin{eqnarray*}\ell(\alpha_5[z_1,w])&\leq& c_2\ell(\gamma[z_1,u_{2i-1}])+\ell(\beta_{1,i}[u_{2i-1},w])\\ &\leq& 128c c_2d_G(u_{2i-1})+c_2|u_{2i-1}-u_{2i}|\\&\leq& 4c_2(32c+1)d_G(u_{2i-1})\leq 8 c^2_2(32c+1)d_G(w).\end{eqnarray*}
Hence, Lemma \ref{coro} is follows from Cases \ref{Ca1}, \ref{Ca2} and \ref{Ca3}.
\end{proof}

\begin{theorem}\label{thm1} A domain $D\subset E$ is a $c$-John domain if and only if
$G=D\setminus P_D$ is a $c_1$-John domain, where $c\geq 1$ and $c_1\geq 1$
depend only on each other.\end{theorem}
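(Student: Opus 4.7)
The plan is to deduce the biconditional from Lemma~\ref{coro}, which already packages the essential geometric work. For the forward implication ($D$ is $c$-John $\Rightarrow$ $G$ is $c_1$-John), I would fix any $z_1, z_2 \in G$ and apply the John property of $D$ to obtain a $c$-cone arc $\gamma \subset D$ joining them. Then Lemma~\ref{coro} directly produces an arc $\alpha \subset G$ from $z_1$ to $z_2$ that is a $(2^{18}cc_2^3 + 660c_2^2)$-cone arc in $G$. Since $c_2 \leq 18$ by Lemma~\ref{lem3-1}, the constant $c_1 := 2^{18}cc_2^3 + 660c_2^2$ is bounded by an explicit linear function of $c$, which finishes this direction.

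For the reverse implication ($G$ is $c_1$-John $\Rightarrow$ $D$ is $c$-John), I would take arbitrary $z_1, z_2 \in D$ and argue by cases. If both $z_1, z_2 \in G$, then the $c_1$-cone arc in $G$ supplied by hypothesis is automatically a $c_1$-cone arc in $D$, because $d_G(z) \leq d_D(z)$ for every $z \in G$. The interesting case is when $z_1 \in P_D$ (the treatment when $z_2 \in P_D$ is symmetric, and the case when both endpoints lie in $P_D$ combines the two). Here I would invoke Lemma~\ref{star}: the ball $\mathbb{B}(z_1, \frac{1}{6} d_D(z_1))$ meets $P_D$ only at $z_1$, so any point $w_1$ at distance $\frac{1}{12} d_D(z_1)$ from $z_1$ lies in $G$. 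Concatenating $[z_1, w_1]$ with a $c_1$-cone arc $\beta \subset G$ from $w_1$ to $z_2$ yields an arc $\alpha \subset D$. The cone condition on $[z_1, w_1]$ is immediate since $d_D(z) \geq \frac{11}{12} d_D(z_1)$ there; and on the portion of $\beta$ closer (along the arc) to $z_2$, the inequality $\ell(\beta[z_2, z]) \leq c_1 d_G(z) \leq c_1 d_D(z)$ gives the cone property directly.

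The main obstacle is the remaining subcase: $z \in \beta$ lies closer along $\beta$ to $w_1$ than to $z_2$. Then $\ell(\beta[w_1, z]) \leq c_1 d_G(z) \leq c_1 d_D(z)$, but $\ell(\alpha[z_1, z]) = |z_1 - w_1| + \ell(\beta[w_1, z])$ carries an extra term $|z_1 - w_1| = d_D(z_1)/12$ that must be absorbed into $d_D(z)$. My plan is to exploit the triangle inequality $d_D(z_1) \leq d_D(z) + |z - w_1| + |w_1 - z_1|$ together with $|z - w_1| \leq \ell(\beta[w_1, z]) \leq c_1 d_D(z)$. Rearranging yields $d_D(z_1)/12 \leq \frac{1 + c_1}{11} d_D(z)$, and hence $\ell(\alpha[z_1, z]) \leq \frac{1 + 12 c_1}{11} d_D(z)$. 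Consequently $D$ is $c$-John with $c$ a linear function of $c_1$, establishing the biconditional with quantitative constants in both directions.
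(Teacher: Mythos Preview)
Your proposal is correct and follows essentially the same route as the paper: the forward direction is Lemma~\ref{coro} applied verbatim, and the reverse direction splices a short segment $[z_1,w_1]$ (with $w_1\in G$ guaranteed by Lemma~\ref{star}) onto a $c_1$-cone arc in $G$. The only cosmetic difference is that where the paper invokes Lemma~\ref{lem3-2} to bound $d_D(w)$ from below on the portion of $\beta$ near $w_1$, you obtain the same linear-in-$c_1$ control via the direct triangle-inequality estimate $d_D(z_1)\le d_D(z)+|z-w_1|+|w_1-z_1|$; the paper also separates out the trivial case $|z_1-z_2|\le\frac14\max\{d_D(z_1),d_D(z_2)\}$, which your argument does not need.
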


\medskip
\begin{proof}~{\em Necessity:~~}Let $D$ be a $c$-John domain. For given $z_1,z_2\in G$, there is a $c$-cone arc $\gamma$ in $D$
 joining $z_1$ and $z_2$. By Lemma \ref{coro}, we know that there exists a $(2^{18}cc_2^3+660c c_2^2)$-cone arc in $G$ joining $z_1$ and $z_2$. Then $G$ is a John domain.

{\em Sufficiency:}\;  Let $c= \frac{65}{63}c_1.$
For each $z_1,z_2\in D$, we prove that there exists an arc $\beta\subset D$ joining $z_1$ and $z_2$ such that
\begin{equation}\label{thm1-mianeq2}\min\{\ell(\beta[z_1,w]),\ell(\beta[z_2,w])\}\leq c d_D(w)\;\;{\rm for}\;{\rm all}\; w\in \beta.\end{equation}
If $|z_1-z_2|\leq
\frac{1}{4}\max\{d_D(z_1),d_D(z_2)\}$, then let $$\beta=[z_1,z_2],$$ and obviously, $$\min\{|z_1-w|,|z_2-w|\}\leq d_D(w)\;\;{\rm for}\;{\rm all}\; w\in [z_1,z_2],$$ which shows that \eqref{thm1-mianeq2} holds.

 In the following, we assume that $|z_1-z_2|>
\frac{1}{4}\max\{d_D(z_1),d_D(z_2)\}$. If $z_1, z_2\in G$, then
let $\gamma$ be a $c_1$-cone arc joining $z_1$ and $z_2$ in
$G$, and take $$\beta=\gamma.$$ Then $\beta$ satisfies \eqref{thm1-mianeq2} since $G\subset D$.

If $z_1\in P_D$ but $z_2\notin P_D$, then let $x\in G$ be such that $|z_1-x|=
\frac{1}{64}d_D(z_1)$, and $\gamma$ be a $c_1$-cone arc
joining $x$ and $z_2$ in $G$. Take
$$\beta=[z_1,x]\cup\gamma.$$
If $z_1\notin P_D$ but $z_2\in P_D$, then let $y\in G$ be such that $|z_2-y|=
\frac{1}{64}d_D(z_2)$, and $\gamma$ be a $c_1$-cone arc
joining $y$ and $z_1$ in $G$. Take
$$\beta=[z_2,y]\cup\gamma.$$
If $z_1,\;z_2\in P_D$, then let $x\in G$ such that $|z_1-x|=
\frac{1}{64}d_D(z_1)$ and $y\in G$ such
that $|z_2-y|= \frac{1}{64}d_D(z_2)$, and $\gamma$ be a
$c_1$-cone arc joining $x$ and $y$ in $G$. Take
$$\beta=[z_1,x]\cup \gamma\cup[y,z_2].$$ To prove that these three arcs $\beta$ are cone arcs in $D$, it is enough to consider the third case where $z_1\;z_2\in P_D$. In this case, $\beta=[z_1,x]\cup \gamma\cup[y,z_2].$

Let $z_0$ bisect the arclength of $\gamma$. It suffices to prove that for all $w\in
\beta[z_1,z_0]$, $$\ell(\beta[z_1,w])\leq \frac{65}{63}c_1
d_D(w).$$

On one hand, if $w\in[z_1,x]$, then \begin{equation}\label{eq10} \ell(\beta[z_1,w])=|z_1-w|\leq
\frac{1}{64}d_D(z_1)\leq \frac{1}{63}d_D(w).\end{equation}

On the other hand, if $w\in \gamma[x,z_0]$, then Lemma \ref{lem3-2}
shows $$d_D(w)\geq \frac{1}{2c_1}d_D(x),$$ which, together with
\eqref{eq10}, shows that

$$\ell(\beta[z_1,w])=|z_1-x|+\ell(\gamma[x,w])\leq \frac{1}{63}d_D(x)+ c_1d_D(w)\leq \frac{65}{63}c_1d_D(w).$$
Hence \eqref{thm1-mianeq2} holds, and so the proof of Theorem \ref{thm1} is complete.
\end{proof}

As an application of Theorem \ref{thm1}, we show the following result concerning inner uniform
domains.

\begin{theorem}\label{thm2}A domain $D\subset E$ is an inner $c$-uniform domain if and only if
$G=D\setminus P_D$ is  an inner $c_1$-uniform domain, where $c\geq 1$ and
$c_1\geq 1$ depend only on each other.\end{theorem}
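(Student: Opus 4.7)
The plan is to follow the scheme of Theorem~\ref{thm1}, upgrading each step so that the inner length constraint is also transported. The extra ingredient needed, beyond what appears in the John case, is the quasi-isometry
\begin{equation}\label{innerlambda}
\lambda_D(x,y) \le \lambda_G(x,y) \le 660\,c_2^2\,\lambda_D(x,y) \qquad (x,y \in G),
\end{equation}
whose first inequality is trivial because $G \subset D$, and whose second follows by applying Lemma~\ref{coro} to a rectifiable arc in $D$ of length arbitrarily close to $\lambda_D(x,y)$: the resulting arc lies in $G$ and has length at most $660\,c_2^2$ times the original.

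For necessity, assume $D$ is inner $c$-uniform and fix $z_1,z_2 \in G$. Choose an inner $c$-uniform arc $\gamma$ in $D$, so $\gamma$ is a $c$-cone arc with $\ell(\gamma) \le c\,\lambda_D(z_1,z_2)$. Lemma~\ref{coro} produces an arc $\alpha \subset G$ joining $z_1,z_2$ which is a $(2^{18}c c_2^3 + 660 c_2^2)$-cone arc in $G$ with $\ell(\alpha) \le 660\,c_2^2\,\ell(\gamma)$. Combining these with \eqref{innerlambda},
\[
\ell(\alpha) \le 660\,c_2^2\,c\,\lambda_D(z_1,z_2) \le 660\,c_2^2\,c\,\lambda_G(z_1,z_2),
\]
so $\alpha$ witnesses inner $c_1$-uniformity of $G$ with $c_1$ depending only on $c$.

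For sufficiency, assume $G$ is inner $c_1$-uniform and fix $z_1,z_2 \in D$. If $|z_1-z_2| \le \tfrac14 \max\{d_D(z_1),d_D(z_2)\}$, then $\beta=[z_1,z_2]$ satisfies the cone-arc bound and $\ell(\beta) \le \lambda_D(z_1,z_2)$ trivially. Otherwise, exactly as in the proof of Theorem~\ref{thm1}, replace each $z_i \in P_D$ by a nearby $x_i' \in G$ with $|z_i-x_i'|=\tfrac{1}{64}d_D(z_i)$ (and set $x_i'=z_i$ if $z_i \notin P_D$), take an inner $c_1$-uniform arc $\gamma \subset G$ joining $x_1',x_2'$, and let $\beta$ be $\gamma$ with the short segments attached. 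The cone-arc property of $\beta$ in $D$ is argued exactly as in the proof of Theorem~\ref{thm1}. For the length bound, observe that $|z_i-x_i'| \le \tfrac{1}{64}d_D(z_i) < \tfrac{1}{16}|z_1-z_2| \le \tfrac{1}{16}\lambda_D(z_1,z_2)$, so by the triangle inequality for $\lambda_D$ (using that $[z_i,x_i']\subset D$),
\[
\lambda_D(x_1',x_2') \le \lambda_D(z_1,z_2) + |z_1-x_1'| + |z_2-x_2'| \le \tfrac{9}{8}\,\lambda_D(z_1,z_2).
\]
Using the inner $c_1$-uniformity of $\gamma$ in $G$ together with \eqref{innerlambda},
\[
\ell(\gamma) \le c_1\,\lambda_G(x_1',x_2') \le c_1\cdot 660\,c_2^2\cdot\tfrac{9}{8}\,\lambda_D(z_1,z_2),
\]
and adding the two short segments gives $\ell(\beta) \le c\,\lambda_D(z_1,z_2)$ with $c=c(c_1)$.

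The principal obstacle is the upper bound in \eqref{innerlambda}. This is the quantitative content of Lemma~\ref{coro} applied to near-extremal arcs for $\lambda_D$; once established, the remainder is a routine adaptation of the case analysis from Theorem~\ref{thm1}, with the additional bookkeeping of length estimates described above.
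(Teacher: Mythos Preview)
Your proof is correct and follows essentially the same approach as the paper: both use Lemma~\ref{coro} for the necessity and, for sufficiency, combine the cone-arc argument from Theorem~\ref{thm1} with the comparison $\lambda_G \le 660c_2^2\,\lambda_D$ obtained by pushing near-optimal $D$-arcs into $G$ via Lemma~\ref{coro}. Your presentation is slightly cleaner in that you isolate the inequality \eqref{innerlambda} at the outset and treat the cases $z_i\in G$ and $z_i\in P_D$ uniformly, whereas the paper derives the $\lambda$-comparison inline and splits into separate cases, but the substance is the same.
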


\begin{proof}
We first prove the necessary part of the theorem, that is, if $D$ is
an inner $c$-uniform domain,  we need to prove that  each pair
$z_1,z_2\in G$  can be joined  by an inner
$c_1$-uniform arc in $G$, where $c_1=2^{18}c^2c_2^3+660c_2^2$, and $c_2$ ($2\leq c_2\leq 18$) is a constant from Lemma \ref{lem3-1} .

For $z_1,z_2\in G$, since $D$ is an inner $c$-uniform domain, then there is an arc $\gamma$ joining $z_1$ and $z_2$ in $D$ such that for all $w\in \gamma$
$$\min\{\ell(\gamma[z_1,w]),\ell(\gamma[z_2,w]) \} \leq c d_D(w)$$
and
$$\ell(\gamma)\leq c \lambda_D(z_1,z_2).$$
By Lemma \ref{coro}, we know that there exists an arc
$\alpha\subset G$ such that $\alpha$ is a $(2^{18}c^2c_2^3+660c_2^2)$-cone arc in $G$ and $\ell(\alpha)\leq 660c^2_2 \ell(\gamma)$. Hence
$$\ell(\alpha)\leq 660c^2_2 \ell(\gamma)\leq 660 c c^2_2\lambda_D(z_1,z_2)\leq 660c c^2_2 \lambda_G(z_1,z_2),$$ which shows that $\alpha$ is the desired arc.

\medskip

To prove the sufficient part of Theorem \ref{thm2}, we need to prove that for each $z_1,z_2\in D$, there exists an arc $\beta$ joining $z_1$ and $z_2$ in $D$ such that
\begin{equation}\label{thm2-mianeq1}\min\{\ell(\beta[z_1,w]),\ell(\beta[z_2,w])\}\leq (1485c_1 c_2^2+\frac{1}{8})d_D(w)\;\;{\rm for}\;{\rm all}\;w\in\beta,\end{equation}
and \begin{equation}\label{thm2-mianeq2}\ell(\beta)\leq (1485c_1 c_2^2+\frac{1}{8})\lambda_D(z_1,z_2).\end{equation}

If $|z_1-z_2|\leq\frac{1}{4}\max\{d_D(z_1),d_D(z_2)\}$, then let $$\beta=[z_1,z_2].$$ Obviously, $\beta$ satisfies  \eqref{thm2-mianeq1} and \eqref{thm2-mianeq2}.

 In the following, we assume that \begin{equation}\label{eqa}|z_1-z_2|>
\frac{1}{4}\max\{d_D(z_1),d_D(z_2)\}.\end{equation} We divide the proof of this case
into two parts.

\begin{case}\label{ca3}$z_1,z_2\in G$.\end{case}
Since $G$ is an inner $c_1$-uniform domain,
then there is a $c_1$-cone arc $\gamma$ joining $z_1$ and $z_2$ in
$G$ such that \begin{equation}\label{eq001}\ell(\gamma)\leq
c_1\lambda_G(z_1,z_2).\end{equation}

Obviously, $\gamma$ satisfies \eqref{thm2-mianeq1} since $G\subset D$. In order to prove $\gamma$ satisfies \eqref{thm2-mianeq2}, we let $\alpha$ be an arc joining $z_1$ and $z_2$ in $D$ with
\begin{equation}\label{eq1-11}\ell(\alpha)\leq 2\lambda_D(z_1,z_2).\end{equation}
By Lemma \ref{coro}, we join $z_1$ and $z_2$ by an arc
$\alpha_1\subset G$ such that
$$\ell(\alpha_1)\leq 660c^2_2 \ell(\alpha),$$ which,
together with \eqref{eq001} and \eqref{eq1-11}, shows
that $$\ell(\gamma)\leq c_1\lambda_G(z_1,z_2)\leq c_1 \ell(\alpha_1)\leq 1320c_1 c^2_2\lambda_D(z_1,z_2).$$

Now we take $$\beta=\gamma.$$ Obviously, $\beta$ satisfies \eqref{thm2-mianeq1} and \eqref{thm2-mianeq2}.

\begin{case}\label{ca4}$z_1\notin G$ or $z_2\notin G$.\end{case}

Without loss of generality, we may assume that $z_1\notin G$ and $z_2\notin G$, since the proof for the case $z_1\in G$, $z_2\notin G$ or $z_1\notin G$, $z_2\in G$ is similar.
Let $x,\;y\in G$ be such that \begin{equation}\label{eqa1}|z_1-x|=
\frac{1}{64}d_D(z_1),\;\;\;\;\;\;|z_2-y|= \frac{1}{64}d_D(z_2),\end{equation}  and let
$\gamma$ be an inner $c_1$-uniform arc joining $x$ and $y$ in $G$.
Take $$\beta=[z_1,x]\cup \gamma\cup[y,z_2].$$

By Theorem \ref{thm1} and its proof, we know that $\beta$ satisfies \eqref{thm2-mianeq1}.
It follows from Case \ref{ca3} that $$\ell(\gamma)\leq 1320c_1c_2^2
c_2\lambda_D(x,y),$$ which, together with \eqref{eqa} and \eqref{eqa1}, shows that
\begin{eqnarray*}\ell(\beta[z_1,z_2])&=&|z_1-x|+
\ell(\gamma[x,y])+|y-z_2|\\&\leq& \frac{1}{8}|z_1-z_2|+1320c_1
c^2_2\lambda_D(x,y)\\&\leq&(1485c_1
c^2_2+\frac{1}{8})\lambda_D(z_1,z_2),
\end{eqnarray*}from which we see that $\beta$ satisfies \eqref{thm2-mianeq2}.
Hence the proof of Theorem \ref{thm2} is complete.
\end{proof}

\section{Stability of $\psi$-John domains}

In \cite{HV}, the authors considered the $\psi$-John domains which is a generalization of John domains.

\begin{definition}\label{def-2'}A domain $D$ is said to be a $\psi$-John domain if $\psi$ is an increasing self-homeomorphism of $[0,\infty]$ and if for some fixed $x_0\in D$ and for all $y\in D$,  we have $$k_D(x_0,y)\leq \psi\left(\frac{|x_0-y|}{\min\{d_D(x_0),d_D(y)\}}\right).$$

\end{definition}

The following lemma follows immediately from  \eqref{eq(0000)}.

\begin{lemma}\label{lem4-0}If $\psi:[0,\infty]\to [0,\infty]$ is a homeomorphism  such that a domain is a $\psi$-John domain, then $\log(1+t)\leq \psi(t)$ holds for all $t\geq 0.$ \end{lemma}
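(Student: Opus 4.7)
The plan is to combine the $\psi$-John inequality from Definition~\ref{def-2'} with the universal lower bound $k_D \geq j_D$ provided by \eqref{eq(0000)}. Specifically, for the distinguished basepoint $x_0 \in D$ of the $\psi$-John definition and any $y \in D$, writing
$$s(y) := \frac{|x_0-y|}{\min\{d_D(x_0),d_D(y)\}},$$
the middle piece of \eqref{eq(0000)} gives $k_D(x_0,y) \geq j_D(x_0,y) = \log(1+s(y))$, while the $\psi$-John hypothesis gives $k_D(x_0,y) \leq \psi(s(y))$. Hence
$$\log(1+s(y)) \leq \psi(s(y)) \quad \text{for every } y \in D.$$

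To conclude $\log(1+t)\leq \psi(t)$ for all $t \geq 0$, it remains to show that the image of the map $s \colon D \to [0,\infty)$ is all of $[0,\infty)$. This is a standard connectedness argument: $s$ is continuous on $D$, satisfies $s(x_0)=0$, and tends to $+\infty$ along any sequence $y_n \to \partial D$ (since $d_D(y_n)\to 0$ while $|x_0-y_n|$ stays bounded below by a positive constant depending only on $d_D(x_0)$). Picking any continuous curve in $D$ emanating from $x_0$ and approaching $\partial D$, the intermediate value theorem shows that every value $t \in [0,\infty)$ is realized as $s(y)$ for some $y$ on the curve, and the claimed inequality follows.

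There is no substantial obstacle here; the only mildly delicate point is verifying that $s(y) \to \infty$ as $y \to \partial D$, which follows from $|x_0-y| \geq d_D(x_0) - d_D(y) \to d_D(x_0) > 0$. Even if one did not want to invoke surjectivity of $s$, it would suffice to note that the image is dense in $[0,\infty)$ and that both $\log(1+\cdot)$ and $\psi$ are continuous (the latter because $\psi$ is a homeomorphism), but the surjectivity route is cleaner.
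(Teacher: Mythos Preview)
Your proof is correct and follows exactly the approach the paper intends: the paper's entire proof is the remark that the lemma ``follows immediately from \eqref{eq(0000)},'' and you have simply spelled out the combination of \eqref{eq(0000)} with Definition~\ref{def-2'} together with the surjectivity of $y \mapsto s(y)$ that the paper takes for granted.
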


By \cite[Theorem 2.23]{Vai4}, we have the following lemma which is useful for the discussion in the rest of this section.

\begin{lemma}\label{lem4-1}Suppose that $D\subset E$ is a domain and that $D_1\subset D$ is a $c$-uniform domain. Then for all $x, y\in D_1$,
$$k_D(x,y)\leq c_1 j_D(x,y)$$ with $c_1=c_1(c)\leq 7c^3.$\end{lemma}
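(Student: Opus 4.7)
The plan is to follow the classical route for bounding $k_D$ by $j_D$ along a uniform arc, and to carry over the argument from the ambient subdomain $D_1$ to the enveloping domain $D$ by exploiting the inclusion $D_1\subset D$, which gives the inequality $d_{D_1}(z)\leq d_D(z)$ for every $z\in D_1$. Fix $x,y\in D_1$, and use the $c$-uniformity of $D_1$ to select an arc $\alpha\subset D_1$ joining $x$ and $y$ such that $\ell(\alpha)\leq c|x-y|$ and $\min\{\ell(\alpha[x,z]),\ell(\alpha[y,z])\}\leq c\, d_{D_1}(z)\leq c\,d_D(z)$ for every $z\in\alpha$. Then
\[
k_D(x,y)\leq \ell_{k_D}(\alpha)=\int_\alpha \frac{|dz|}{d_D(z)},
\]
so the task reduces to estimating this integral.

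Next I would bisect $\alpha$ at a point $z_0$ with $\ell(\alpha[x,z_0])=\ell(\alpha[y,z_0])=L/2$, where $L=\ell(\alpha)$. Assume without loss of generality that $d_D(x)\leq d_D(y)$. On the half $\alpha[x,z_0]$, parameterize by arclength $s=\ell(\alpha[x,z])$, and split it at $s=d_D(x)/2$: on the piece $0\leq s\leq d_D(x)/2$ use the triangle inequality $d_D(z)\geq d_D(x)-s\geq d_D(x)/2$, which contributes at most $1$ to the integral, while on the piece $d_D(x)/2\leq s\leq L/2$ use the cone condition $d_D(z)\geq s/c$, which contributes at most $c\log(L/d_D(x))$. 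The same bound applies to $\alpha[y,z_0]$ since $d_D(y)\geq d_D(x)$. Combining, and using $L\leq c|x-y|$, I obtain
\[
k_D(x,y)\leq 2+2c\log\!\Bigl(\tfrac{c|x-y|}{d_D(x)}\Bigr).
\]

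The final step is to convert this into the form $c_1 j_D(x,y)$ with $c_1\leq 7c^3$. Writing $t=|x-y|/d_D(x)$ so that $j_D(x,y)=\log(1+t)$, I would split into two cases. When $t\leq 1$ the direct quasihyperbolic estimate \eqref{upperbdk} applied along $\alpha$ gives $k_D(x,y)\lesssim c\,t\leq c_1 j_D(x,y)$ since $j_D(x,y)\asymp t$. When $t>1$, one has $\log(ct)\leq (1+\log c)\log(1+t)/\log 2$, and the constant $2$ in the earlier estimate is absorbed by $j_D(x,y)\geq\log 2$, yielding the desired affine bound by a multiple of $j_D(x,y)$.

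The main obstacle is the sharp constant $7c^3$: the naive estimate above yields a bound of order $c\log c$, so getting a polynomial-in-$c$ constant requires a somewhat sharper splitting (for instance choosing the break-point in the arc so that both the ``near-endpoint'' and ``cone'' estimates match up optimally, and using $c\geq 1$ together with elementary inequalities of the form $\log c\leq c$ and $\log 2\geq 1/c^2$ type bounds absorbed into a $c^3$ factor). I expect this bookkeeping step, rather than the structural argument, to be the delicate part, which is exactly the content of \cite[Theorem 2.23]{Vai4}.
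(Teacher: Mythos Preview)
The paper does not actually prove this lemma: it merely records the statement and cites \cite[Theorem~2.23]{Vai4}. Your sketch is precisely the classical argument behind that cited theorem --- take a $c$-uniform arc in $D_1$, use $d_{D_1}\le d_D$ to transfer the cone condition to $D$, bisect the arc, split each half into a ``near-endpoint'' piece (triangle inequality) and a ``cone'' piece ($d_D(z)\ge s/c$), and compare the resulting logarithmic bound with $j_D$. So you are not taking a different route; you are reconstructing V\"ais\"al\"a's proof, and you even cite the same source at the end.

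One small correction: your remark that the naive estimate yields ``a bound of order $c\log c$'' undersells it. After using $\log c\le c$ your displayed bound $2+2c\log(ct)$ already gives $c_1$ of order $c^2$, which is \emph{stronger} than the claimed $7c^3$; the cubic exponent in the lemma is not sharp but just a convenient closed form. The bookkeeping you flag as delicate is therefore easier than you suggest, not harder.
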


From Lemma \ref{lem3-1} and Lemma \ref{lem4-1}, we easily get the
following corollary.

\begin{corollary}\label{cor4-2}Suppose that $D\subset E$ is
a domain and $ G=D\setminus P_D$. For $x,y\in D$, if
$d_D(x)=128d_G(x)$ and $y\in \overline{\mathbb{B}}(x,
\frac{1}{32}d_D(x))$,  then $k_G(x,y)\leq \mu j_G(x,y),$ where
$\mu\leq 7\times 18^3$ is a constant. \end{corollary}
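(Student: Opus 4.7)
The plan is to deduce this corollary directly by combining Lemma \ref{lem3-1} (which gives uniformity of punctured balls) with Lemma \ref{lem4-1} (which gives the $k$-versus-$j$ comparison for points in a uniform subdomain). The hypothesis $d_D(x)=128d_G(x)$ guarantees that $\mathbb{B}(x,\frac{1}{6}d_D(x))$ meets $P_D$ (so the punctured ball is a proper subdomain), but otherwise the quantitative uniformity constant is provided purely by Lemma \ref{lem3-1}.

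First I would set $D_1 := \mathbb{B}\bigl(x,\tfrac{1}{6}d_D(x)\bigr)\setminus P_D$. Applying Lemma \ref{lem3-1} with $x_0=x$ shows that $D_1$ is a $c_2$-uniform domain with $2\leq c_2\leq 18$. Next I would check containment: since $\mathbb{B}(x,\frac{1}{6}d_D(x))\subset\mathbb{B}(x,d_D(x))\subset D$, we have $D_1\subset D\setminus P_D=G$, so $D_1$ is a uniform subdomain of the ambient domain $G$.

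Then I would verify that both points of interest lie in $D_1$. Certainly $x\in G$ sits at the center of $\mathbb{B}(x,\frac{1}{6}d_D(x))$, so $x\in D_1$. For $y$, the hypothesis gives $y\in\overline{\mathbb{B}}\bigl(x,\tfrac{1}{32}d_D(x)\bigr)\subset\mathbb{B}\bigl(x,\tfrac{1}{6}d_D(x)\bigr)$, and since $k_G(x,y)$ and $j_G(x,y)$ are the quantities in the conclusion, we must be in the regime $y\in G$; hence $y\in D_1$.

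Finally I would invoke Lemma \ref{lem4-1} with the ambient domain taken to be $G$ and the uniform subdomain taken to be $D_1$: it yields
\[
k_G(x,y)\leq c_1\,j_G(x,y)
\]
with $c_1=c_1(c_2)\leq 7c_2^3\leq 7\cdot 18^3$, which is exactly the desired estimate with $\mu=c_1$. There is no substantial obstacle here; the only thing to be careful about is the direction of the comparison (Lemma \ref{lem4-1} reads $k$-in-the-large is controlled by $j$-in-the-large once a uniform pocket containing both points is exhibited), and the role of the hypothesis $d_D(x)=128d_G(x)$ is really just to ensure that the statement is nonvacuous in the applications, not to enter the quantitative bound.
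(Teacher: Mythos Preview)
Your proposal is correct and follows precisely the approach the paper indicates: the paper simply states that the corollary follows ``easily'' from Lemma~\ref{lem3-1} and Lemma~\ref{lem4-1}, and you have spelled out exactly that deduction, applying Lemma~\ref{lem4-1} with ambient domain $G$ and uniform subdomain $D_1=\mathbb{B}(x,\tfrac{1}{6}d_D(x))\setminus P_D$. Your remark that the hypothesis $d_D(x)=128d_G(x)$ serves mainly to place $x$ in $G$ (and put the statement in its intended context) rather than to enter the quantitative bound is also accurate.
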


Meanwhile, \cite[Lemma 3.7(2)]{Vu} yields the following corollary.

\begin{corollary}\label{cor4-1}Suppose that $D\subset E$ is a domain. For
$x,y\in D$, if $|x-y|\leq \frac{1}{2}\min\{d_D(x),d_D(y)\}$, then
$k_D(x,y)\leq 2 j_D(x,y)$. \end{corollary}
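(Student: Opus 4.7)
The plan is to reduce the corollary to a direct calculation using the already-established upper bound \eqref{upperbdk} for $k_D$ in a ball. Without loss of generality I would assume $d_D(x)\le d_D(y)$, so the hypothesis becomes $|x-y|\le \tfrac12 d_D(x)$. Setting $r=|x-y|/d_D(x)\in[0,\tfrac12]$, the definition of the distance ratio metric gives $j_D(x,y)=\log(1+r)$, while \eqref{upperbdk} yields
$$k_D(x,y)\le \log\Bigl(1+\frac{r}{1-r}\Bigr)=\log\frac{1}{1-r}.$$

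The whole statement $k_D(x,y)\le 2j_D(x,y)$ then reduces to the elementary inequality $\log\frac{1}{1-r}\le 2\log(1+r)$ for $r\in[0,\tfrac12]$. Exponentiating, this is equivalent to $(1+r)^2(1-r)\ge 1$, i.e.\ $r(1-r-r^{2})\ge 0$, which holds on $[0,\tfrac12]$ because $1-\tfrac12-\tfrac14=\tfrac14>0$. This finishes the proof.

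There is really no obstacle: the only ingredient beyond the definition of $j_D$ is inequality \eqref{upperbdk}, and the hypothesis $|x-y|\le\tfrac12\min\{d_D(x),d_D(y)\}$ is precisely what is needed to make the elementary comparison $-\log(1-r)\le 2\log(1+r)$ work on $[0,\tfrac12]$. The constant $2$ in the conclusion is tight in this argument in the sense that it is forced by the comparison at $r=\tfrac12$, and no appeal to the external reference \cite[Lemma 3.7(2)]{Vu} is strictly necessary — the result is purely a consequence of \eqref{upperbdk} combined with a one-variable calculus check.
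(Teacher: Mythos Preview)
Your argument is correct and self-contained. The paper does not actually prove this corollary: it simply invokes \cite[Lemma~3.7(2)]{Vu} as a black box. You instead derive it directly from inequality~\eqref{upperbdk}, which is already stated in the paper, together with the one-variable inequality $(1+r)^2(1-r)\ge 1$ on $[0,\tfrac12]$. This is a more elementary and internal route than the paper's citation, and it makes the corollary independent of the external reference.

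One small side remark: your comment that the constant $2$ is ``tight in this argument in the sense that it is forced by the comparison at $r=\tfrac12$'' is not quite accurate. At $r=\tfrac12$ the ratio $\log\frac{1}{1-r}\big/\log(1+r)$ equals $\log 2/\log\tfrac32\approx 1.71$, so your inequality $(1+r)^2(1-r)\ge 1$ still has slack there; the supremum of the ratio over $[0,\tfrac12]$ is strictly below $2$. This does not affect the validity of the proof, only the aside about sharpness.
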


Before the statement of our main result in this section, we prove the following two lemmas.

\begin{lemma}\label{lem4-2}Let $D$ be a domain and $ G=D\setminus P_D$. For each $x\in D$, there exists some point $w\in \mathbb{S}(x, \frac{1}{32}d_D(x))$ such that $$\frac{1}{48}d_D(x)<\frac{1}{33}d_D(w)\leq d_{G}(w)\leq \frac{33}{31}d_D(w).$$ \end{lemma}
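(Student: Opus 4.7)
\medskip
\noindent \textbf{Proof plan.} The two outer inequalities in the chain are essentially free from the triangle inequality. For any $w\in\mathbb{S}(x,\tfrac{1}{32}d_D(x))$ one has
$\tfrac{31}{32}d_D(x)\le d_D(w)\le\tfrac{33}{32}d_D(x)$, so $\tfrac{1}{33}d_D(w)\ge\tfrac{31}{33\cdot 32}d_D(x)>\tfrac{1}{48}d_D(x)$, giving the leftmost inequality; and the rightmost inequality reduces to the trivial bound $d_G(w)\le d_D(w)$, since $G\subset D$. So the whole content of the lemma is the middle inequality: I need to exhibit a concrete $w$ on the sphere of radius $\tfrac{1}{32}d_D(x)$ about $x$ that is not too close to any point of $P_D$, specifically with
$d_G(w)\ge\tfrac{1}{33}d_D(w)$.

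The first real step is to use Lemma \ref{star} applied to $x$: at most one point of $P_D$ lies in $\mathbb{B}(x,\tfrac{1}{6}d_D(x))$. For every other $x_j\in P_D$ the triangle inequality gives, uniformly in $w\in\mathbb{S}(x,\tfrac{1}{32}d_D(x))$,
\[
|w-x_j|\ge |x-x_j|-|x-w|\ge \tfrac{1}{6}d_D(x)-\tfrac{1}{32}d_D(x)=\tfrac{13}{96}d_D(x),
\]
which is comfortably larger than $\tfrac{1}{33}d_D(w)\le \tfrac{1}{32}d_D(x)$. Thus the far points of $P_D$ pose no obstruction, and only the possible unique close point matters.

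The second step is a short case analysis. If $P_D\cap\mathbb{B}(x,\tfrac{1}{6}d_D(x))=\emptyset$, any $w$ on the sphere works (one can pick $w\notin P_D$ since the sphere is uncountable in a Banach space of dimension $\ge 2$ and $P_D$ is countable). Otherwise let $x_i$ be the unique point of $P_D$ in $\mathbb{B}(x,\tfrac{1}{6}d_D(x))$. If $x=x_i$, any $w$ on the sphere satisfies $|w-x_i|=\tfrac{1}{32}d_D(x)\ge\tfrac{1}{33}d_D(w)$. If $x\ne x_i$, I choose
\[
w=x+\tfrac{1}{32}d_D(x)\,\tfrac{x-x_i}{|x-x_i|},
\]
the point on the sphere lying along the ray from $x_i$ through $x$; then $|w-x_i|=|x-x_i|+\tfrac{1}{32}d_D(x)\ge\tfrac{1}{32}d_D(x)\ge\tfrac{1}{33}d_D(w)$. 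A brief check confirms $w\notin P_D$: since $|w-x_i|>0$ we have $w\ne x_i$, and every other $x_j$ is at distance $\ge\tfrac{13}{96}d_D(x)>\tfrac{1}{32}d_D(x)=|w-x|$ from $x$, so $w\ne x_j$.

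There is no real obstacle: the only mildly delicate point is arranging $w$ on the side of $x$ opposite to the unique close $x_i$ so that the specific point is both in $G$ and sufficiently far from $x_i$. Once that explicit choice is made, combining it with the uniform lower bound from the first step yields $d_G(w)\ge\tfrac{1}{33}d_D(w)$, completing the proof.
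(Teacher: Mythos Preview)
Your proof is correct and follows essentially the same approach as the paper: invoke Lemma \ref{star} to isolate at most one nearby point $x_i\in P_D$, then choose $w$ on the sphere $\mathbb{S}(x,\tfrac{1}{32}d_D(x))$ on the side of $x$ opposite to $x_i$ (the paper phrases this as taking $w$ on the line through $x$ and $x_i$ with $d_G(w)\ge\tfrac{1}{32}d_D(x)$). Your write-up is in fact slightly more careful than the paper's in two respects: you explicitly verify the lower bound $|w-x_j|\ge\tfrac{13}{96}d_D(x)$ for the far points of $P_D$, and you treat the degenerate case $x=x_i$ separately (the paper's line $l$ is undefined there); you also observe that the rightmost inequality is immediate from $d_G\le d_D$, whereas the paper routes it through the extra estimate $d_D(w)\ge\tfrac{31}{32}d_D(x)$.
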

\begin{proof} Let $x\in D$. By Lemma \ref{star}, there exists at most one point in $P_D\cap \mathbb{B}(x,\frac{1}{6}d_D(x))$. On one hand, if $P_D\cap \mathbb{B}(x,\frac{1}{6}d_D(x))=\emptyset,$ let $w\in \mathbb{S}(x, \frac{1}{32}d_D(x))$. On the other hand, if $P_D\cap \mathbb{B}(x,\frac{1}{6}d_D(x))\not=\emptyset,$ then there exists one and only one point $x_i$ in $P_D\cap \mathbb{B}(x,\frac{1}{6}d_D(x))$.  Let $l$ be a line determined by $x$ and $x_i$, and take $w\in l\cap \mathbb{S}(x,\frac{1}{32}d_D(x))$ such that $d_G(w)\geq \frac{1}{32}d_D(x)$. Then $$d_D(w)\leq d_D(x)+|w-x|\leq \frac{33}{32}d_D(x),$$and so $$d_G(w)\geq\frac{1}{32}d_D(x)\geq \frac{1}{33}d_D(w).$$ Hence $$d_D(w)\geq d_D(x)-|x-w|=\frac{31}{32}d_D(x)\geq \frac{31}{33}d_G(w)$$ and $$d_G(w)\geq \frac{1}{33}d_D(w)>\frac{1}{48}d_D(x).$$ The proof is complete. \end{proof}

\begin{lemma}\label{lem4-3}Let $D$ be a domain and $ G=D\setminus P_D$. For each $x\in D$ and $w\in \mathbb{S}(x, \frac{1}{32}d_D(x))$, if $d_D(x)\geq 128d_G(x)$, then  $ d_{G}(w)\geq \frac{1}{44}d_D(w).$ \end{lemma}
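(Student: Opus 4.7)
The plan is a direct geometric estimate built on Lemma \ref{star}. First I would observe that the hypothesis $d_D(x)\geq 128\,d_G(x)$ forces the existence of a point $x_i\in P_D$ close to $x$: since $d_G(x)$ is the distance from $x$ to the complement of $G$ in $D$, which is a combination of $\partial D$ and $P_D$, and since $d_G(x)\leq \frac{1}{128}d_D(x)<d_D(x)$, the infimum must be realized near a point of $P_D$, giving $x_i\in P_D$ with $|x-x_i|\leq \frac{1}{128}d_D(x)$. By Lemma \ref{star} applied at $x$, this $x_i$ is the only point of $P_D$ in $\mathbb{B}(x,\tfrac{1}{6}d_D(x))$.

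Next I would estimate $d_D(w)$ from both sides using the triangle inequality: $\frac{31}{32}d_D(x)\leq d_D(w)\leq \frac{33}{32}d_D(x)$. Together with $|x-x_i|\leq \frac{1}{128}d_D(x)$, this gives
\[
|w-x_i|\leq \tfrac{1}{32}d_D(x)+\tfrac{1}{128}d_D(x)=\tfrac{5}{128}d_D(x)<\tfrac{1}{6}d_D(w),
\]
so $x_i\in \mathbb{B}(w,\tfrac{1}{6}d_D(w))$. A second application of Lemma \ref{star}, now at the point $w$, shows that $x_i$ is the unique element of $P_D$ inside this ball, so every other point of $P_D$ has distance at least $\tfrac{1}{6}d_D(w)$ from $w$; in particular $x_i$ is the nearest point of $P_D$ to $w$.

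Finally I would bound $|w-x_i|$ from below by the reverse triangle inequality:
\[
|w-x_i|\geq |w-x|-|x-x_i|\geq \tfrac{1}{32}d_D(x)-\tfrac{1}{128}d_D(x)=\tfrac{3}{128}d_D(x).
\]
Since $d_D(w)\leq \tfrac{33}{32}d_D(x)$, we have $\tfrac{3}{128}d_D(x)\geq \tfrac{3}{128}\cdot \tfrac{32}{33}d_D(w)=\tfrac{1}{44}d_D(w)$. As $d_G(w)=\min\{d_D(w),|w-x_i|\}$ by the preceding paragraph, and both terms are at least $\tfrac{1}{44}d_D(w)$, the inequality $d_G(w)\geq \tfrac{1}{44}d_D(w)$ follows.

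The main (and only real) subtlety is the second application of Lemma \ref{star}: one has to be careful that no other point of $P_D$ can slip into the ball $\mathbb{B}(w,\tfrac{1}{6}d_D(w))$ and thereby shrink $d_G(w)$ below the bound coming from $x_i$. The arithmetic above is precisely arranged so that $|w-x_i|<\tfrac{1}{6}d_D(w)$, which lets Lemma \ref{star} be invoked at $w$; otherwise the entire argument breaks. The numerology is also sharp in the sense that the factor $\tfrac{1}{44}$ corresponds exactly to $\tfrac{3}{128}\cdot \tfrac{32}{33}$, so no slack is wasted.
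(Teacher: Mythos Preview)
Your argument is correct and follows the same route as the paper: locate the unique $x_i\in P_D$ near $x$ via Lemma~\ref{star}, bound $d_D(w)$ by $\tfrac{33}{32}d_D(x)$, and use the reverse triangle inequality to get $|w-x_i|\geq \tfrac{3}{128}d_D(x)\geq \tfrac{1}{44}d_D(w)$. The only difference is that the paper simply asserts $d_G(w)=|w-x_i|$, while you justify it explicitly by a second application of Lemma~\ref{star} at $w$; this extra care is warranted, since the paper's one-line proof leaves that identification implicit.
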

\begin{proof} Observe first that $$d_D(w)\leq d_D(x)+|w-x|\leq \frac{33}{32}d_D(x).$$ Let $x\in D$. Since $d_D(x)\geq 128d_G(x)$, then by Lemma \ref{star}, there exists one and only one point, namely $x_i$, in $P_D\cap \mathbb{B}(x,\frac{1}{6}d_D(x))$.  Hence $$d_G(w)=|w-x_i|\geq |x-w|-|x-x_i|\geq \frac{3}{128}d_D(x)\geq \frac{1}{44}d_D(w).$$   Thus the proof of the lemma is complete. \end{proof}

For $\psi$-John domains, we get the following stability of $\psi$-John domain.

\begin{theorem}\label{thm3}A domain $D\subset E$ is a $\psi$-John domain if and only if
$G=D\setminus P_D$ is  a $\psi_1$-John domain, where $\psi$ and
$\psi_1$ are homeomorphisms depending only on each other.\end{theorem}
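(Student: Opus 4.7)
The plan is to prove both directions separately, with the necessity (passing from $D$ to $G = D \setminus P_D$) being considerably harder. A common ingredient is Lemma 4.2, which at every point of $D$ supplies a ``good'' neighbor $w$ with $d_G(w) \approx d_D(w)$; I would first fix the base point $x_0 \in G$ to be such a good point, so that $d_G(x_0) \geq \frac{1}{33} d_D(x_0)$. Throughout, I write $t_D = |x_0-y|/\min\{d_D(x_0),d_D(y)\}$ and $t_G = |x_0-y|/\min\{d_G(x_0),d_G(y)\}$, noting $t_G \geq t_D$.

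For the sufficiency (assume $G$ is $\psi_1$-John), I split by the size of $|x_0-y|$. If $|x_0-y| \leq \tfrac{1}{2}\min\{d_D(x_0),d_D(y)\}$, the inequality \eqref{upperbdk}/Corollary \ref{cor4-1} immediately gives $k_D(x_0,y) \leq 2\log(1+t_D)$. Otherwise, I use Lemma \ref{lem4-2} to choose $y' \in G$ with $|y-y'| = d_D(y)/32$ and $d_G(y') \geq d_D(y')/33$, and route $k_D(x_0,y) \leq k_G(x_0,y') + k_D(y,y') \leq \psi_1(s') + \tfrac{1}{30}$, where $s' = |x_0-y'|/\min\{d_G(x_0),d_G(y')\}$. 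Since both endpoints of $s'$ are good, $\min\{d_G(x_0),d_G(y')\} \geq C_1\min\{d_D(x_0),d_D(y)\}$, and $|x_0-y'| \leq |x_0-y| + d_D(y)/32 \leq C_2|x_0-y|$ in this regime, so $s' \leq C t_D$. Taking $\psi(s) := \max\{2\log(1+s),\, \psi_1(Cs) + \tfrac{1}{30}\}$ (adjusted by adding a small $\log(1+s)$ term if needed to make it a strict self-homeomorphism of $[0,\infty]$) does the job.

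For the necessity (assume $D$ is $\psi$-John), the close case $|x_0-y| \leq \tfrac{1}{2}\min\{d_G(x_0),d_G(y)\}$ again follows from Corollary \ref{cor4-1}. In the remaining case I take a $2$-neargeodesic $\gamma$ in $D$ from $x_0$ to $y$ via Lemma \ref{ThmA}, so that $\ell_{k_D}(\gamma) \leq 2k_D(x_0,y) \leq 2\psi(t_D) \leq 2\psi(t_G)$. I then modify $\gamma$ into an arc $\alpha \subset G$ in the spirit of Lemma \ref{coro}: by Lemma \ref{star} the balls $\mathbb{B}(x_i, r_i)$ with $r_i = d_D(x_i)/32$ are disjoint and contain at most one point of $P_D$ each, so I replace every subarc of $\gamma$ entering some $\mathbb{B}(x_i, r_i)$ by a detour in the punctured ball $\mathbb{B}(x_i, r_i) \setminus \{x_i\}$ furnished by Lemma \ref{ThmC}. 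The $k_G$-length of $\alpha$ is estimated in three parts: (a) on retained parts of $\gamma$ every point $z$ satisfies $|z-x_i| \geq d_D(x_i)/32$ for all $i$, and a short argument shows this forces $d_G(z) \geq d_D(z)/64$, so the $k_G$-length there is at most $64\,\ell_{k_D}(\gamma) \leq 128\,\psi(t_G)$; (b) each detour, lying on a uniform punctured sphere of radius $\sim r_i$ with $d_G \sim r_i$ along it, has $\ell_{k_G}$ bounded by a universal constant via Lemmas \ref{lem4-1} and \ref{ThmC}; (c) the quasihyperbolic separation $k_D(x_i,x_j) \geq 1/2$ together with \eqref{upperbdk} forces $\gamma$ to spend at least a definite amount of $k_D$-length between consecutive detour visits, so the number of detours is at most $C'\,\ell_{k_D}(\gamma) \leq 2C'\psi(t_G)$. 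Summing, $k_G(x_0,y) \leq \ell_{k_G}(\alpha) \leq C''\psi(t_G) + C'''$, and I define $\psi_1(s) := \max\{2\log(1+s), C''\psi(s) + C'''\}$, adjusted as before.

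The main obstacle is step (b) combined with step (c): carefully constructing the detours so that they land inside $G$ (not merely avoid the single nearby $x_i$) and have quasihyperbolic length bounded by a constant independent of the local geometry of $D$. This is where Lemma \ref{star} (at most one $P_D$-point per small ball) is essential, because it allows the punctured-ball uniform structure of Lemma \ref{ThmC} to be invoked verbatim and keeps the detours both short in norm (as in Lemma \ref{coro}) and short in $k_G$. The remaining bookkeeping --- in particular bounding $s'$ by $C t_D$ in the sufficiency and turning the additive constants into a legitimate increasing self-homeomorphism --- is routine once this detour estimate is in place.
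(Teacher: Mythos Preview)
Your necessity argument (passing from $D$ to $G$) is correct in outline and close to the paper's: both take a neargeodesic in $D$, identify the ``bad'' stretches where $d_G \ll d_D$, replace them by detours of uniformly bounded $k_G$-cost using the punctured-ball uniformity of Lemmas~\ref{star}--\ref{lem3-1}, and bound the number of detours by the $k_D$-length of the neargeodesic. The paper parametrizes the bad stretches via points $w$ on $\gamma$ with $d_D(w)=128\,d_G(w)$ (see Claim~\ref{cl-4}) rather than via balls $\mathbb{B}(x_i,r_i)$ around the points of $P_D$, but this is a cosmetic difference; your version works once you combine repeated entries of $\gamma$ into the same ball into a single detour and treat the endpoint case $y\in\mathbb{B}(x_i,r_i)$ separately, as the paper does in its Case~2.

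The real gap is in your sufficiency. You write that you ``fix the base point $x_0\in G$ to be such a good point'' and then invoke $k_G(x_0,y')\le\psi_1(s')$, which presupposes that $x_0$ is simultaneously the $\psi_1$-John center of $G$ and satisfies $d_G(x_0)\ge d_D(x_0)/33$. But the hypothesis only hands you \emph{some} center $z_0\in G$, and nothing prevents $z_0$ from lying extremely close to a point of $P_D$, so that $d_G(z_0)\ll d_D(z_0)$. In that situation your key estimate $s'\le C\,t_D$ fails: the denominator $\min\{d_G(x_0),d_G(y')\}$ is $d_G(z_0)$, not a constant multiple of $d_D(z_0)$, and the resulting bound $\psi_1(s')$ is not controlled by any function of $t_D$ depending only on $\psi_1$. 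Nor can you simply move the center inside $G$ to a nearby good point $x_0$, because the transfer cost $k_G(z_0,x_0)$ is of order $\log\bigl(d_D(z_0)/d_G(z_0)\bigr)$, which is again not bounded in terms of $\psi_1$ alone. The paper avoids this by a bootstrapping trick: it deletes the given center $z_0$ from $G$, applies the already-proved necessity to the pair $(G,\{z_0\})$ to conclude that $G_1=G\setminus\{z_0\}$ is $\psi'$-John with a \emph{new} center $y_0$ chosen (via Lemma~\ref{lem4-2}) so that $d_G(y_0)$, $d_{G_1}(y_0)$ and $d_D(y_0)$ are all comparable, and then routes $k_D(y_0,y)\le k_{G_1}(y_0,y')+O(1)$ with the $G_1$-distances now under control. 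This removal-of-the-center step is the missing idea in your outline.
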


\begin{proof}
We first prove the necessary part of the theorem. For this, we assume that $D$ is $\psi$-John domain with center $x_0$, where $x_0\in D$. By Lemma \ref{lem4-2}, there exists some point $w_0$ in $ \mathbb{S}(x_0,\frac{1}{32}d_D(x_0))$ such that \begin{equation}\label{th-eq1}\frac{1}{48}d_D(x_0)<\frac{1}{33}d_D(w_0)\leq d_{G}(w_0)\leq \frac{33}{31}d_D(w_0)\end{equation}  and \begin{equation}\label{th-eq1'}d_D(w_0)\leq d_D(x_0)+|x_0-w_0|\leq \frac{33}{32}d_D(x_0).\end{equation} We come to prove that there exists some homeomorphism $\psi'$ of $[0,\infty)$ such that $G$ is a $\psi'$-John domain with center $w_0$.  That is, we need to find a homeomorphism $\psi'$ of $[0,\infty)$ such that for each $y\in G$,\begin{equation}\label{eqjohn}k_G(w_0,y)\leq \psi'\left(\frac{|w_0-y|}{\min\{d_G(w_0),d_G(y)\}}\right).\end{equation}


%

For $y\in G$, if $|w_0-y|\leq \frac{1}{2}\max\{d_G(w_0),d_G(y)\}$, then Lemmas \ref{lem4-0} and Corollary \ref{cor4-1} show that

$$k_G(w_0,y)\leq 2 \log\left(1+\frac{|w_0-y|}{\min\{d_G(w_0),d_G(y)\}}\right)
\leq 2\psi\left(\frac{|w_0-y|}{\min\{d_G(w_0),d_G(y)\}}\right),$$
which shows that \eqref{eqjohn} holds with $\psi_1(t)=2\psi(t).$
  Hence, in
the following, we assume that \begin{equation}\label{theq00}|w_0-y|>
\frac{1}{2}\max\{d_G(w_0),d_G(y)\}.\end{equation}

Let $\gamma$ be a $2$-neargeodesic joining $w_0$ and $y$ in $D$. We leave the proof for a moment and prove the following claim.

\begin{claim}\label{cl-4}  There exists a sequence of points $\{w_i\}^{p}_{i=0}$ in $\gamma$, where $p\geq 1$ is an integer, satisfying the following conditions.  \begin{enumerate}

\item  For each even number $j\in \{0,\ldots,p-1\}$, $d_D(w_j)\leq 44d_G(w_j)$;

\item For each even number $j\in \{0,\ldots,p-1\}$, $w_{j+1}$ is the first point in $\gamma[w_j,y]$ from $w_j$ to $y$ such that $d_D(w_{j+1})=128d_G(w_{j+1})$;

  \item If $p\geq 2$, then for each even number $j\in \{1,\ldots,p\}$, $w_j\in \overline{\mathbb{B}}(w_{j-1},\frac{1}{32}d_D(w_{j-1}))$.

\end{enumerate}\end{claim}

Obviously, by \eqref{th-eq1}, we have $$d_D(w_0)\leq 33d_G(w_0)<44d_G(w_0).$$ If for all $w\in \gamma$, $d_D(w)< 128 d_G(w)$, then let $w_1=y$. Then the claim obviously holds with $p=1.$
If there exists some point $v_0\in \gamma$
such that $d_D(v_0)\geq 128d_G(v_0),$ then by \eqref{th-eq1}, there exist a point $w_1\in \gamma$  be the first point from $w_0$ to
$y$ satisfying $$d_D(w_1)= 128d_G(w_1).$$
 If $y\in \overline{\mathbb{B}}(w_1,\frac{1}{32}d_D(w_1))$, then the claim holds by letting $w_2=y$, and then $p=2$. Otherwise,
 let $w_2\in \gamma\cap \mathbb{S}(w_1,\frac{1}{32}d_D(w_1))$   such that
$$\gamma[w_2,y]\cap \mathbb{B}(w_1, \frac{1}{32}d_D(w_1))=\emptyset.$$
Then by Lemma \ref{lem4-3}, we have $$d_G(w_2)\geq   \frac{1}{44}d_D(w_2).$$
 If for all $w\in \gamma[w_2,y]$, $$d_D(w)\leq 128d_G(w),$$
then the claim holds with $w_3=y$, and then $p=3$. Otherwise, let $w_3$ be the first point in $\gamma[w_2,y]$ from $w_2$ to $y$ such that
 $$d_G(w_3)=\frac{1}{128}d_D(w_3).$$
 By repeating this process for finite steps, we get a sequence  $\{w_i\}^p_{i=0}\in
\gamma$  satisfying the claim,
 where $p<\frac{M}{\log\frac{33}{32}}$, since for each even number $i\in\{1,2,\ldots,p\}$, $$\ell_{k_D}(\gamma[w_{i-1}, w_{i}])\geq \log\big(1+\frac{|w_{i-1}-w_{i}|}{d_D(w_{i-1})}\big)=\log\frac{33}{32},$$ and $M=\ell_{k_D}(\gamma[w_0,y])$. Hence Claim \ref{cl-4} holds.

Now, we come back to the proof of the necessary part of the theorem.
By Claim \ref{cl-4}, we know that for each even number $j\in
\{0,\ldots,p-1\}$ the following holds: for all $w\in
\gamma[w_j,w_{j+1}]$, $$d_D(w)\leq 128 d_G(w).$$ Hence
\begin{equation}\label{eqjohn1}k_G(w_j, w_{j+1})\leq
\int_{\gamma[w_j,w_{j+1}]}\frac{|dw|}{d_G(w)}<128\ell_{k_D}(\gamma[w_j,w_{j+1}])\leq
256 k_D(w_j, w_{j+1}).\end{equation} By Claim \ref{cl-4}, we also
know that if $p\geq 2$, then for each even number $j\in
\{1,\ldots,p\}$, $w_j\in
\overline{\mathbb{B}}(w_{j-1},\frac{1}{32}d_D(w_{j-1}))$. Hence by
Corollary   \ref{cor4-2} and Claim \ref{cl-4},  we have
\begin{equation}\label{eqjohn2}k_G(w_{j-1},w_j)\leq \mu
\log\left(1+\frac{|w_{j-1}-w_j|}{\min\{d_G(w_{j-1}),d_G(w_j)\}}\right)\leq
128\mu k_D(w_{j-1},w_j),\end{equation} where $\mu$ is the constant
from Corollary   \ref{cor4-2}.

Now we divided the rest part of proof into two cases.

\begin{case} $d_G(y)\geq \frac{1}{128}d_D(y)$. \end{case}

By \eqref{th-eq1} and \eqref{theq00}, we have \begin{equation}\label{eqjohn3'}|x_0-w_0|=\frac{1}{32}d_D(x_0)< \frac{3}{2}d_G(w_0)\leq 3|y-w_0|\end{equation} and \begin{equation}\label{eqjohn3''}|x_0-y|\leq |x_0-w_0|+|y-w_0|\leq 4|y-w_0|,\end{equation}
which, together with $\eqref{th-eq1}$, \eqref{th-eq1'}, Claim \ref{cl-4}, \eqref{eqjohn1} and \eqref{eqjohn2}, shows that
\begin{eqnarray}\label{eqjohn3}k_G(w_0,y)&\leq& \sum^{p-1}_{i=0}k_G(w_i,w_{i+1})
\leq 256\mu \sum^{p-1}_{i=0}k_D(w_i,w_{i+1})\\ \nonumber&\leq&
512\mu k_D(w_0,y)\leq 512\mu( k_D(x_0,w_0)+k_D(x_0,y))\\ \nonumber
&\leq&
512\mu\psi\left(\frac{|x_0-w_0|}{\min\{d_D(x_0),d_D(w_0)\}}\right)\\\nonumber
&+&512\mu\psi\left(\frac{|x_0-y|}{\min\{d_D(x_0),d_D(y)\}}\right)\\
\nonumber&\leq&
\psi_2\left(\frac{|y-w_0|}{\min\{d_G(y),d_G(w_0)\}}\right),\end{eqnarray}
where $\psi_2(t)=1024\mu\psi(8t)$.

\begin{case} $d_G(y)< \frac{1}{128}d_D(y)$. \end{case} In this case, by Claim
\ref{cl-4}, we see that $p$ must be an even number and $p\geq 2$,
and then $y\in \overline{\mathbb{B}}(w_{p-1},
\frac{1}{32}d_D(w_{p-1}))$. If $w_0\in
\overline{\mathbb{B}}(w_{p-1}, \frac{1}{32}d_D(w_{p-1})),$ then by
Corollary  \ref{cor4-2} and Claim \ref{cl-4},  we get
$$k_G(w_{0},y)\leq \mu
\log\left(1+\frac{|w_{0}-y|}{\min\{d_G(w_{0}),d_G(y)\}}\right)\leq
\psi_3\left(\frac{|w_{0}-y|}{\min\{d_G(w_{0}),d_G(y)\}}\right),$$where
$\psi_3(t)=\mu\psi(t)$.

If $w_0\notin \overline{\mathbb{B}}(w_{p-1}, \frac{1}{32}d_D(w_{p-1})),$ then by \eqref{eqjohn3''},  $$d_G(y)<\frac{1}{128}d_D(y)\leq \frac{1}{128}(d_D(w_{p-1})+|w_{p-1}-y|)<\frac{1}{64}d_D(w_{p-1})$$
 and $$|w_{p-1}-x_0|\leq |w_{p-1}-y|+|x_0-y|< 5|y-w_0|,$$which, together with Lemma \ref{lem4-0}, \eqref{th-eq1}, \eqref{eqjohn1}, \eqref{eqjohn2} and \eqref{eqjohn3'}, shows that
\begin{eqnarray*}k_G(w_0,y)&\leq& 256\mu \sum^{p-2}_{i=0}k_D(w_i,w_{i+1})+k_G(w_{p-1},y)\\\nonumber &\leq&
512\mu
k_D(w_0,w_{p-1})+\mu\log\left(1+\frac{|w_{p-1}-y|}{\min\{d_G(w_{p-1}),d_G(y)\}}\right)\\\nonumber
&\leq&512\mu(k_D(x_0,w_0)+k_D(x_0,w_{p-1}))+\mu\log\left(1+\frac{|w_{p-1}-y|}{\min\{d_G(w_{p-1}),d_G(y)\}}\right)\\\nonumber
&\leq&512\mu
\left(\psi\left(\frac{|x_0-w_0|}{\min\{d_D(x_0),d_D(w_0)\}}\right)+\psi\left(\frac{|x_0-w_{p-1}|}{\min\{d_D(x_0),d_D(w_{p-1})\}}\right)\right)
\\\nonumber&+&\mu\log\left(1+\frac{|w_{p-1}-y|}{\min\{d_G(w_{p-1}),d_G(y)\}}\right)\\\nonumber &\leq&
\psi_1\left(\frac{|w_0-y|}{\min\{d_G(w_0),d_G(y)\}}\right),
\end{eqnarray*} where $\psi_4(t)=1025\mu\psi(8t).$
Hence \eqref{eqjohn} holds with $\psi'(t)= 1025\mu\psi(8t)$.

%

\medskip
Now we are going to prove the sufficient part of Theorem \ref{thm3}.

Assume that $G$ is $\psi_1$-John domain with center $z_0$, where $z_0\in G$.
By Lemma \ref{lem4-2}, there exists some point $y_0$ in $ \mathbb{S}(z_0,\frac{1}{32}d_D(z_0))$ such that \begin{equation}\label{th-new-eq1}\frac{1}{48}d_D(z_0)<\frac{1}{33}d_D(y_0)\leq d_{G}(y_0)\leq \frac{33}{31}d_D(y_0).\end{equation}
 We show that there exists a homeomorphism $\psi$ of $[0,\infty)$ such that $D$ is a $\psi$-John domain with center $y_0$.   By the  necessary part of the theorem, we know that $G_1=G\setminus\{z_0\}$ is a $\psi'$-John domain with center $y_0$, where $\psi'(t)= 1025\mu\psi_1(8t)$.

For $y\in D$, if $|y_0-y|\leq \frac{1}{2}\max\{d_D(y_0),d_D(y)\}$, then Lemmas \ref{lem4-0} and Corollary \ref{cor4-1} show that
$$k_D(y_0,y)\leq 2 \log\left(1+\frac{|y_0-y|}{\min\{d_D(y_0),d_D(y)\}}\right)\leq \psi'_1\left(\frac{|y_0-y|}{\min\{d_D(y_0),d_D(y)\}}\right),$$ where $\psi'_1(t)=2\psi_1(t)$ and $2$ is from Corollary \ref{cor4-1}.
In the following, we assume that \begin{equation}\label{eqjohn7}|y_0-y|\geq \frac{1}{2}\max\{d_D(y),d_D(y_0)\}.\end{equation}
If $d_D(y)\leq 62 d_G(y)$, then by \eqref{th-new-eq1},$$|y-z_0|\geq |y-y_0|-|y_0-z_0|\geq \frac{1}{128}d_D(z_0).$$
Now we claim that \begin{equation}\label{eqjohn6}d_G(y)\leq 129d_{G_1}(y).\end{equation} In fact, if $d_G(y)=d_{G_1}(y)$, then the above inequality is obvious. If $d_G(y)> d_{G_1}(y)$, then $d_{G_1}(y)=|y-z_0|$. Hence $$d_G(y)\leq d_D(z_0)+|z_0-y|\leq 129d_{G_1}(y),$$ which shows \eqref{eqjohn6}.

Similarly, we have \begin{equation}\label{eqjohn8}d_G(y_0)\leq 129 d_{G_1}(y_0).\end{equation}  Hence \eqref{th-new-eq1} and \eqref{eqjohn6} yields

 $$k_D(y_0,y)\leq k_{G_1}(y_0,y)\leq \psi'\left(\frac{|y_0-y|}{\min\{d_{G_1}(y_0),d_{G_1}(y)\}}\right)\leq \psi'_2\left(\frac{|y_0-y|}{\min\{d_{D}(y_0),d_{D}(y)\}}\right),$$ where $\psi'_2(t)= 1025\mu\psi_1(2^{15}t).$

 If $d_D(y)\geq 62 d_G(y)$, then for  $y_1\in \mathbb{S}(y,\frac{1}{16}d_D(y))$,  Lemma \ref{star} implies \begin{equation}\label{eqjohn9}d_{D}(y_1)\leq d_D(y)+|y_1-y|\leq 32 d_G(y_1).\end{equation}
Hence, a similar proof as to \eqref{eqjohn6} leads to $$d_G(y_1)\leq
129 d_{G_1}(y_1),$$  which, together with Corollary \ref{cor4-1},
\eqref{th-new-eq1}, \eqref{eqjohn7}, \eqref{eqjohn8} and
\eqref{eqjohn9}, shows that \begin{eqnarray*}k_D(y_0,y)&\leq&
k_{G_1}(y_0,y_1)+ k_D(y_1,y)\\ &\leq&
\psi'\left(\frac{|y_0-y_1|}{\min\{d_{G_1}(y_0),d_{G_1}(y_1)\}}\right)+2\log\left(1+\frac{|y_1-y|}{\min\{d_D(y_1),d_D(y)\}}\right)\\
&\leq&
\psi'_3\left(\frac{|y_0-y|}{\min\{d_D(y_0),d_D(y)\}}\right),\end{eqnarray*}
where $\psi'_3(t)= 1025(\mu+2)\psi_1(2^{15}t)$, and $\mu$ is the
constant from Corollary \ref{cor4-2}.  By letting
$\psi(t)=1025(\mu+2)\psi_1(2^{15}t),$ we get the sufficient part of
the theorem. Hence the proof of the theorem is complete.
\end{proof}

\begin{remark} Let $\psi:[0,\infty]\to[0,\infty]$ be a homeomorphism and $c$, $\lambda_1$, $\lambda_2$ be positive constants. We define the following class:
$$\Psi_{\lambda_1,\lambda_2}=\{\psi: \lambda_1\leq \frac{\psi(ct)}{\psi(t)}\leq \lambda_2\}.$$

The proof of
Theorem \ref{thm3} yields the following
quantitative statement: $\psi_1(t) = b_1 \psi(b_2 t)$ and  $\psi_2(t) = b_3 \psi(b_4 t)$ for some positive constants $b_j$.
Thus we see that if $D$ is a $\psi$-John domain with $\psi\in \Psi_{\lambda_1,\lambda_2}$, then $D\setminus P_D$ is a $\psi_1$-John domain with $\psi_1\in \Psi_{\lambda_1,\lambda_2}$. The converse implication also holds.
\end{remark}


\begin{thebibliography}{99}

\bibitem{Avv}  {\sc G. D. Anderson, M. K. Vamanamurthy and M. Vuorinen}, Dimension-free
quasiconformal distortion in $n$-space. \textit{Trans. Amer. Math.
Soc.,} {\bf 297}(1986), 687--706.

\bibitem{BHK} {\sc M. Bonk, J. Heinonen and P. Koskela}, Uniformizing Gromov
hyperbolic domains, \textit{Asterisque} {\bf 270}(2001), 1--99.

\bibitem{Bro} {\sc O. J. Broch}, Geometry of John disks, Ph. D. Thesis, NTNU, 2004.


\bibitem{FW}  {\sc F. W. Gehring}, Uniform domains and the ubiquitous
quasidisks, \textit{Jahresber. Deutsch. Math. Verein,} {\bf
89}(1987), 88--103.

\bibitem{GP}  {\sc F. W. Gehring and B. P. Palka}, Quasiconformally
homogeneous domains, \textit{J. Analyse Math.,} {\bf 30}(1976),
172--199.


\bibitem{Geo}  {\sc F. W. Gehring and B. G. Osgood}, Uniform domains
 and the
quasi-hyperbolic metric, \textit{J. Analyse Math.,} {\bf 36}(1979),
50--74.






\bibitem{HPS}  {\sc P. H\"ast\"o, S. Ponnusamy and S. K. Sahoo,}
Inequalities and geometry of the Apollonian and related metrics,
{\em Rev. Roumaine Math. Pures Appl.} {\bf 51}(2006), 433--452.

\bibitem {HIMPS} {\sc P.  H\"ast\"o, Z. Ibragimov, D. Minda, S. Ponnusamy and S. Sahoo},
{Isometries of some hyperbolic-type path metrics, and the hyperbolic
medial axis.}  In the tradition of Ahlfors-Bers. IV, Edited by Dick Canary, Jane Gilman, Juha Heinonen and Howard Masur.
 63--74, Contemp. Math., 432, Amer. Math. Soc., Providence, RI,  2007.

\bibitem{HV}{\sc D. A. Herron and M. Vuorinen}, Positive harmonic functions in uniform and admissible domains,
\textit{Analysis,} {\bf
8}(1988), no. 1-2, 187--206, MR0954466.




\bibitem{HPW}{\sc M. Huang, S. Ponnusamy and X. Wang}, Decomposition and removability properties of John domains,
\textit{Proc. Indian Acad. Sci. (Math. Sci.),} {\bf
118}(2008),357--370.

\bibitem{HPWW}{\sc M. Huang, S. Ponnusamy, H. Wang and X. Wang}, A cosine inequality in hyperbolic geometry,
\textit{App. Math. Letters,} {\bf 23}(2010),887--891.




\bibitem{J}  {\sc F.
John,} Rotation and strain. \textit{Comm. Pure Appl. Math.} {\bf 14}(1961) 391--413.


\bibitem{k}  {\sc R. Kl\'en,} Local convexity properties of quasihyperbolic balls
in punctured space, \textit{J. Math. Anal. Appl.,} {\bf 342}(2008),
192--201.

\bibitem{klsv}  {\sc R. Kl\'en, Y. Li, S. K. Sahoo and M. Vuorinen,} On the stability of
$\varphi$-uniform domains, \textit{Monatshefte f\"ur Mathematik,} accept,
 arXiv: 0812.4369v5 [math.MG].


\bibitem{krt}  {\sc R. Kl\'en, A. Rasila, and J. Talponen,}
Quasihyperbolic geometry in euclidean and Banach spaces, Proc.
ICM2010 Satellite Conf. International Workshop on Harmonic
 and Quasiconformal Mappings (HMQ2010), eds. D. Minda, S. Ponnusamy, N. Shanmugalingam, J. Analysis 18 (2011),261--278,
arXiv:1104.3745v1 [math.CV].


\bibitem{LW}  {\sc Y. Li and X. Wang},
Unions of John domains and uniform domains in real normed vector
spaces, \textit{Ann. Acad. Sci. Fenn. Ser. Math.,} {\bf 35}(2010),
627--632.



\bibitem{Martio-80}  {\sc O. Martio},
Definitions of uniform domains, \textit{Ann. Acad. Sci. Fenn. Ser. A
I Math.,} {\bf 5}(1980), 197--205.

\bibitem{MS}  {\sc O. Martio and J. Sarvas},
Injectivity theorems in plane and space, \textit{Ann. Acad. Sci.
Fenn. Ser. A I Math.,} {\bf 4}(1978), 383--401.

\bibitem{NV}  {\sc R. N\"{a}kki and J. V\"{a}is\"{a}l\"{a}},
John disks, \textit{Expo. Math.,} {\bf 9}(1991), 3--43.

\bibitem{rt} {\sc A. Rasila and J. Talponen,} Convexity properties of quasihyperbolic balls on
Banach spaces, \textit{Ann. Acad. Sci. Fenn.} {\bf 37} (2012),
215--228, arXiv:1007.3197v1 [math. CV].

%
%
%
%
%
%


\bibitem{Vai}  {\sc J. V\"{a}is\"{a}l\"{a}}, Uniform domains,
\textit{Tohoku Math. J.,} {\bf 40}(1988), 101--118.



\bibitem{Vai6-0}  {\sc J. V\"{a}is\"{a}l\"{a}}, Free quasiconformality
in Banach spaces. I, \textit{Ann. Acad. Sci. Fenn. Ser. A I Math.,}
{\bf 15}(1990), 355-379.


\bibitem{Vai7}{\sc J. V\"{a}is\"{a}l\"{a}},
Broken tubes in Hilbert spaces. \textit{Analysis (Munich)} {\bf 24}(2004), no. 3, 227--238.


\bibitem{Vai6}  {\sc J. V\"{a}is\"{a}l\"{a}}, Free quasiconformality
in Banach spaces. II, \textit{Ann. Acad. Sci. Fenn. Ser. A I Math.,}
{\bf 16}(1991), 255-310.




\bibitem{Vai4}  {\sc J. V\"{a}is\"{a}l\"{a}}, Relatively and inner
 uniform domains,
\textit{Conformal Geom. Dyn.,} {\bf 2}(1998), 56--88.


\bibitem{Vai5}  {\sc J. V\"{a}is\"{a}l\"{a}}, The free quasiworld:
freely quasiconformal and related maps in
 Banach spaces.
\textit{Quasiconformal geometry and dynamics $($Lublin 1996$)$,
Banach Center Publications, Vol. 48, Polish Academy of Science,
Warsaw}, ed. by Bogdan Bojarski, Julian  {\L}awrynowicz, Olli Martio, Matti Vuorinen and J\'ozef Zaj\c{a}c, 1999, 55-118.


\bibitem{Vai9} {\sc J.
 V\"{a}is\"{a}l\"{a},} Quasihyperbolic geometry of planar domains. \textit{Ann. Acad. Sci. Fenn. Math.} {\bf 34}(2009),  447--473.

\bibitem{vu81}
{\sc  M. Vuorinen}, {Capacity densities and angular limits of quasiregular mappings.} \textit{Trans. Amer. Math. Soc.} {\bf  263}(1981), 343--354.

\bibitem{Vu}  {\sc M. Vuorinen,}
{\em Conformal Geometry and Quasiregular Mappings},
Lecture Notes in Mathematics~1319,
Springer-Verlag, Berlin--Heidelberg--New York, 1988.

%
%
%

\end{thebibliography}
\end{document}